\newtheorem{theorem}{Theorem}[section]
\newtheorem{lemma}{Lemma}[section]
\newtheorem{remark}{Remark}[section]
\newtheorem{proposition}{Propositon}[section]
\journal{}
\begin{document}

\begin{frontmatter}

\title{Critical traveling waves in a diffusive disease model\tnoteref{label1}}
\tnotetext[label1]{This research was supported in part by NSF of China under the Grant 11731014.}

\author[]{Jiangbo Zhou\corref{cor1}}
\cortext[cor1]{Corresponding author.}
\ead{ujszjb@126.com}

\author[]{Haimei Xu}

\author[]{Jingdong Wei}

\author[]{Liyuan Song}

\address{Nonlinear Scientific Research Center, Faculty of Science, Jiangsu University, Zhenjiang, Jiangsu 212013, People's Republic of China}

\begin{abstract}
In this paper, the existence of a non-trivial, positive and bounded critical traveling wave solution of a diffusive disease model, whose reaction system has infinity many equilibria, is obtained for the first time. This gives an affirmative answer to an open problem left in [X. Wang, H. Wang, J. Wu, Traveling waves of diffusive predator-prey systems: disease outbreak propagation, Discrete Contin. Dyn. Syst. Ser. A 32 (2012) 3303-3324]. Our result shows that the critical traveling wave in this model is a mixed of front and pulse type.
\end{abstract}

\begin{keyword}
diffusive disease model \sep  critical traveling wave \sep reaction-diffusion equation


\MSC[2010]  35Q92 \sep 35K57 \sep 35C07 \sep 92D30.

\end{keyword}


\end{frontmatter}

\section{Introduction}
\setcounter {equation}{0}

Recently, Wang et al. \cite{Wang-Wang-Wu-2012} considered a diffusive disease model
\begin{equation}
\label{eq1.1}
\left\{ {\begin{array}{l}
\displaystyle  S_t(x,t)=d_1S_{xx}(x,t)-\frac{\beta S(x,t)I(x,t)}{S(x,t)+I(x,t)},\\
\displaystyle  I_t(x,t)=d_2I_{xx}(x,t)+\frac{\beta S(x,t)I(x,t)}{S(x,t)+I(x,t)}-\gamma I(x,t),\\
\displaystyle  R_t(x,t)=d_3R_{xx}(x,t)+\gamma I(x,t),
\end{array}} \right.
\end{equation}
where $S(x,t)$, $I(x,t)$ and $R(x,t)$ are the densities of susceptible, infective and removed individuals at location $x$ and time $t$, respectively. The parameters $d_{i}>0 \; (i=1,2,3)$ denote the spatial motility of each class, $\beta>0$ is the transmission coefficient and $\gamma>0$ refers to the recovery rate. Since the first two equations in (\ref{eq1.1}) form a closed system,  they only studied the subsystem of (\ref{eq1.1})
\begin{equation}
\label{eq1.2}
\left\{ {\begin{array}{l}
\displaystyle  S_t(x,t)=d_1S_{xx}(x,t)-\frac{\beta S(x,t)I(x,t)}{S(x,t)+I(x,t)},\\
\displaystyle  I_t(x,t)=d_2I_{xx}(x,t)+\frac{\beta S(x,t)I(x,t)}{S(x,t)+I(x,t)}-\gamma I(x,t).
 \end{array}} \right.
\end{equation}
The traveling wave profile of (\ref{eq1.2}) is given by the system of second-order ordinary differential equations:
\begin{equation}
\label{eq1.3}
\left\{ {\begin{array}{l}
\displaystyle cS'(\xi)=d_1 S''(\xi)-\frac{\beta S(\xi)I(\xi)}{S(\xi)+I(\xi)}, \\
\displaystyle  cI'(\xi)=d_2 I''(\xi)+\frac{\beta S(\xi)I(\xi)}{S(\xi)+I(\xi)}-\gamma I(\xi),
 \end{array}} \right.
\end{equation}
where $\xi=x+ct$ is the wave variable and $c$ is the wave speed. They proved that if the basic reproduction number $\mathcal{R}_0:=\beta/\gamma>1$, then there exists a critical wave speed $c^{*}:=2\sqrt{d_2(\beta-\gamma)}$ such that for each $c>c^{*}$, system (\ref{eq1.1}) admits a non-trivial and non-negative traveling wave solution $(S(\xi),I(\xi))$ which satisfies the asymptotic boundary conditions
\begin{equation}
\label{eq1.4}
S(-\infty)=S_{-\infty}, \quad S(\infty)=S_{\infty}<S_{-\infty}, \quad I(\pm \infty)=0,
\end{equation}
where $S_{-\infty}>0$ is a given constant and $S_{\infty}\geq0$ is some existing constant. They also showed that if $\mathcal{R}_0\leq1$ or $c<c^{*}$, then system (\ref{eq1.1}) has no non-trivial and non-negative traveling wave solutions. When $\mathcal{R}_0>1$ and $c=c^{*}$, the existence of traveling wave solutions for (\ref{eq1.2}) was left as an open problem in \cite{Wang-Wang-Wu-2012}.  Very recently, Wang et al. \cite{Wang-Nie-Wu-2017} employed a limiting argument to solve this problem. Unfortunately, their proof of the non-triviality of the limit is problematic because they used the integrability of $I$ over $\mathbb{R}$ before they hadn't found the asymptotic boundary of the critical traveling wave (see \cite{Wang-Nie-Wu-2017}, Lemma 2.1 and p. 416). In view of this, the problem on the existence of critical traveling wave solutions for (\ref{eq1.2}) remains open.

Traveling waves, especially the ones with the minimal/critical speed, play an important role in the study of disease transmission. The minimal wave speed of a disease is a crucial threshold to predict whether the disease propagates and how fast it invades. Until recently, there has been a number of work devoting to the existence of super-critical and critical traveling waves in diffusive epidemic models \cite{Bai-Wu-2015, Ducrot-Langlais-Magal-2012, Ducrot-Magal-2011,Gan-Xu-Yang-2011, Hethcote-2000, Hosono-Ilyas-1994, Tian-Yuan-2017, Wang-Wang-2016, Wang-Nie-Wu-2017, Wang-Wang-Wu-2012, Wang-Wu-2010, Xu-2014, Xu-2017, Xu-Ai-2016, Zhen-2018-2, Zhou-2016, Zhang-Wang-2014, Wang-Wu-Liu-2012,Zhao-Wang-Ruan-2017, Zhan-Huo-Liu-Sun-2009, Thieme-Zhao-2003, Ding-Huang-Kansakar-2013, Ai-Albashaireh-2014, Li-Li-Lin-2015, Weng-Zhao-2006, Zhao-Wang-2004, Fu-2016, Zhao-Wang-2016, Zhao-Wang-Ruan-2018, Tian-Yuan-2017-2, Wu-Peng-2013}. However, all the investigation of critical traveling waves are limited to the models whose reaction systems have one disease-free equilibrium and one endemic equilibrium. Note that the reaction system of (\ref{eq1.2}) has infinity many equilibria, then the shooting method coupled with LaSalle's invariance principle \cite{Ai-Albashaireh-2014} and the method of dynamic systems \cite{Zhan-Huo-Liu-Sun-2009, Ding-Huang-Kansakar-2013} are not applicable to study the existence of critical traveling wave solutions for (\ref{eq1.2}). Also the standard limiting argument \cite{Thieme-Zhao-2003, Weng-Zhao-2006, Zhao-Wang-2004, Li-Li-Lin-2015, Zhao-Wang-2016, Zhao-Wang-Ruan-2018, Wu-Peng-2013} fails since the $I-$component in (\ref{eq1.2}) is not monotone.

In the present paper, motivated by \cite{Fu-2016, Wang-Wang-Wu-2012},  we intend to solve the problem on the existence of critical traveling wave solutions for (\ref{eq1.2}). The main result is stated as follows.
\begin{theorem}
\label{the1.1}
Suppose that $\mathcal{R}_0>1$ and $c=c^{*}$. Then system (\ref{eq1.2}) admits a non-trivial, positive and bounded traveling wave solution $(S_*(\xi),I_*(\xi))$ satisfying the asymptotic boundary conditions (\ref{eq1.4}) and the following properties.
\begin{itemize}
\item[(1)] $S_*(\xi)$ is strictly decreasing in $\mathbb{R}$  and
\[
0<  S_*(\xi)<S_{-\infty} \quad \textmd{for}\quad \xi\in\mathbb{R}.
\]
\item[(2)] $I_*(\xi)= \mathcal{O}(-\xi e^{\lambda^*\xi})$ as $\xi \rightarrow -\infty$ and
\[
0< I_*(\xi)<\min \bigg\{\frac{(\beta-\gamma)S_{-\infty}}{\gamma}, \frac{2c^*(S_{-\infty}-S_\infty)}{\sqrt{(c^*)^{2}+4d_2\gamma}+c^*}\bigg\} \quad for \;\; \xi\in\mathbb{R}.
\]
\item[(3)]
\[
\int_{-\infty}^{\infty} I_*(\xi)d\xi=\frac{\beta}{\gamma}\int_{-\infty}^{\infty} \frac{S_*(\xi)I_*(\xi)}{S_*(\xi)+I_*(\xi)}d\xi=\frac{c^*(S_{-\infty}-S_\infty)}{\gamma}.
\]
\end{itemize}
\end{theorem}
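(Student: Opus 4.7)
The plan is to construct the critical wave by a limiting argument applied to the super-critical waves of \cite{Wang-Wang-Wu-2012}. Fix a sequence $c_n \downarrow c^*$ and let $(S_n, I_n)$ denote the corresponding super-critical wave at speed $c_n$ with $S_n(-\infty)=S_{-\infty}$, $S_n(+\infty)=S_{n,\infty}\in[0,S_{-\infty})$ and $I_n(\pm\infty)=0$. Integrating the second equation of \eqref{eq1.3} over $\mathbb{R}$ gives the conservation identity $\gamma\int_{\mathbb{R}} I_n(\xi)\,d\xi = c_n(S_{-\infty}-S_{n,\infty})$, which bounds $\int I_n$ uniformly from above. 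To keep the waves from drifting off to infinity as $c_n\to c^*$, I fix a small $\epsilon_0\in(0,(\beta-\gamma)S_{-\infty}/\gamma)$ and translate each $I_n$ so that $I_n(0)=\epsilon_0$ and $I_n(\xi)<\epsilon_0$ for $\xi>0$; such a translate exists as soon as $\max_\xi I_n(\xi)\geq\epsilon_0$. The feasibility of this normalization for all large $n$ is the first technical point and rests on a uniform lower bound on $S_{-\infty}-S_{n,\infty}$ reflecting the linear instability of the disease-free state when $\mathcal{R}_0>1$.

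Once the sequence is normalized, the uniform bounds $0<S_n<S_{-\infty}$ and $0<I_n<M$, together with derivative estimates read off directly from \eqref{eq1.3}, yield $C^{2,\alpha}_{\mathrm{loc}}$ equicontinuity. Arzel\`a--Ascoli plus a diagonal extraction produce a subsequence converging in $C^2_{\mathrm{loc}}(\mathbb{R})$ to a limit $(S_*, I_*)$ solving \eqref{eq1.3} with $c=c^*$. Non-triviality is built in by $I_*(0)=\epsilon_0>0$; the monotonicity and strict positivity of $S_*$ in part (1) are inherited from the approximating waves and the strong maximum principle applied to the first ODE. The pointwise bounds on $I_*$ in part (2) follow from maximum-principle arguments for $d_2 I''-c^* I'+\beta SI/(S+I)-\gamma I=0$: the first quantity is recognized as a constant super-solution coming from the equilibrium structure, while the second is built from the positive root of $d_2\mu^2-c^*\mu-\gamma=0$ together with the conservation identity.

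For the boundary behavior, monotonicity and boundedness of $S_*$ force the two limits $S_*(\pm\infty)$ to exist; I set $S_\infty:=S_*(+\infty)$. Passing Fatou's inequality through the conservation identity gives $I_*\in L^1(\mathbb{R})$, and this together with the uniform bound on $I_*'$ forces $I_*(\pm\infty)=0$. Analyzing the first ODE as $\xi\to-\infty$ with $I_*\to 0$ then pins down $S_*(-\infty)=S_{-\infty}$, and integrating the second equation over $\mathbb{R}$ delivers the identity in part (3). Finally, the asymptotic decay $I_*(\xi)=\mathcal{O}(-\xi\,e^{\lambda^*\xi})$ as $\xi\to-\infty$ reflects the degenerate spectrum at the critical speed: linearizing around $(S_{-\infty},0)$ gives the characteristic equation $d_2\lambda^2-c^*\lambda+(\beta-\gamma)=0$, which at $c=c^*$ has the double root $\lambda^*=c^*/(2d_2)$, and a sub-/super-solution comparison built from functions of the form $(A+B\xi)e^{\lambda^*\xi}$ on a left half-line converts this heuristic into the stated bound.

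The main obstacle, and exactly the point where the argument of \cite{Wang-Nie-Wu-2017} breaks down, is the \emph{non-triviality of the limit}: guaranteeing that the normalized sequence does not collapse as $c_n\to c^*$. What must be established is a uniform lower bound on $\max_\xi I_n(\xi)$ (equivalently, on $S_{-\infty}-S_{n,\infty}$) \emph{prior to} any integrability information on the limit. My strategy is to obtain this directly from the ODE on a fixed compact window by constructing a strict sub-solution for $I_n$ out of the leading eigenmode $e^{\lambda^*\xi}$ at $(S_{-\infty},0)$ and exploiting $\mathcal{R}_0>1$ to ensure $\lambda^*$ is real and positive; this avoids the circular use of $L^1$-integrability of $I$ that caused the gap in \cite{Wang-Nie-Wu-2017}.
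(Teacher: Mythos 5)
Your route (limiting argument over super-critical waves $c_n\downarrow c^*$) is genuinely different from the paper's, which constructs the critical wave \emph{directly at} $c=c^*$ by Schauder's fixed point theorem on the set $\Gamma$ pinched between explicit super- and sub-solutions, namely $\overline{I}(\xi)=\min\{-L_1\xi e^{\lambda^*\xi},M\}$ and $\underline{I}(\xi)=\max\{[-L_1\xi-L_2(-\xi)^{1/2}]e^{\lambda^*\xi},0\}$. That construction builds non-triviality, the left boundary condition $S_*(-\infty)=S_{-\infty}$, and the decay rate $I_*=\mathcal{O}(-\xi e^{\lambda^*\xi})$ into the profile set from the start, and precisely because of this the paper never has to confront the degeneration problem that your approach hinges on. Your treatment of the right-hand asymptotics, the integral identity in (3), and the two upper bounds in (2) (the constant $M$ by a maximum-principle/equilibrium argument, the second bound via an auxiliary function built from the positive root of $d_2\mu^2-c^*\mu-\gamma=0$ and the conservation identity) does track the paper's Sections 3--4.

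The genuine gap is the step you yourself flag as the main obstacle: the uniform lower bound on $\max_\xi I_n$ (equivalently on $S_{-\infty}-S_{n,\infty}$) that keeps the normalization $I_n(0)=\epsilon_0$ feasible. Your proposed fix --- a strict sub-solution for $I_n$ on a fixed compact window built from the eigenmode $e^{\lambda^*\xi}$ at $(S_{-\infty},0)$ --- does not close it, for two reasons. First, the comparison is circular: to dominate $I_n$ from below by a sub-solution on a window you must anchor the sub-solution at the window's boundary, which already requires a lower bound on $I_n$ there, uniform in $n$; no such anchor is available because after the (unknown) translation you control neither where $S_n$ is close to $S_{-\infty}$ nor where $I_n$ is bounded away from zero. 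Second, $e^{\lambda^*\xi}$ is not a strict sub-solution of the relevant equation: at $c=c^*$ it solves the linearization exactly ($\Delta(\lambda^*,c^*)=0$), and the nonlinearity works against you since $\beta S I/(S+I)=\beta I-\beta I^2/(S+I)<\beta I$; absorbing the quadratic defect is exactly why the paper needs the corrected sub-solution $[-L_1\xi-L_2(-\xi)^{1/2}]e^{\lambda^*\xi}$ together with an a priori upper bound $I\leq -L_1\xi e^{\lambda^*\xi}$, and making such bounds uniform in $n$ for translated super-critical waves is an unproved (and nontrivial) claim. This is the same spot where, as the paper points out, the limiting argument of Wang--Nie--Wu breaks down. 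A secondary gap: with your normalization nothing in the first ODE ``pins down'' $S_*(-\infty)=S_{-\infty}$, since every constant is an equilibrium of the reaction system; the limit could a priori land on a wave with a smaller left state, so this too needs an argument rather than an appeal to the ODE at $-\infty$.
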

\begin{remark}
\begin{itemize}
\item[(1)] Theorem \ref{the1.1} shows that the critical traveling wave in (\ref{eq1.2}) is a mixed of front and pulse type.
\item[(2)] To our knowledge, Theorem \ref{the1.1} gives the first result on the existence of positive critical traveling wave solutions for diffusive disease models whose reaction systems have infinity many equilibria.
\item[(3)] Wang et al. \cite{Wang-Wang-Wu-2012} obtained that the super-critical traveling wave solutions of (\ref{eq1.2}) are non-negative and the $S$-component is monotonically decreasing in $\mathbb{R}$. While Theorem \ref{the1.1} indicates that the critical traveling wave solution of (\ref{eq1.2}) is positive and the $S$-component is strictly decreasing in $\mathbb{R}$. In fact, one can apply the method adopted by us to improve the corresponding results in  \cite{Wang-Wang-Wu-2012}.
\end{itemize}
\end{remark}

The remainder of this paper is organized as follows. Section 2-Section 4 are devoting to the proof of Theorem \ref{the1.1}.  In Section 2, the existence of a critical traveling wave solution $(S_*(\xi), I_*(\xi))$ of (\ref{eq1.2}) is established by Schauder's fixed point theorem. In Section 3, the asymptotic boundary of $(S_*(\xi), I_*(\xi))$ at minus infinity is found with the aid of squeeze theorem. While the asymptotic boundary of $S_*(\xi)$ at plus infinity is derived from the monotonicity and boundedness of $S_*(\xi)$ in $\mathbb{R}$ and the asymptotic boundary of $I_*(\xi)$ at plus infinity is shown by the integrability of $I_*(\xi)$  and the boundedness of $I_*'(\xi)$ in $\mathbb{R}$. In Section 4, the positivity and upper bound of $(S_*(\xi), I_*(\xi))$ are obtained via the strong maximum principle and auxiliary function method.

\section{Existence of critical traveling wave solution}
\setcounter {equation}{0}
In this section, we will establish the existence of a critical traveling wave solution for (\ref{eq1.2}) by Schauder's fixed point theorem.

Linearizing the second equation in (\ref{eq1.3}) at $(S_{-\infty},0)$,  we have
\[
d_2 I''(\xi)-cI'(\xi)+(\beta-\gamma)I(\xi)=0.
\]
Letting $I(\xi)=e^{\lambda \xi}$, we then get the characteristic equation
\begin{equation}
\label{eq2.1}
\Delta(\lambda,c):=d_{2}\lambda^{2}-c\lambda +\beta-\gamma=0.
\end{equation}
Obviously, when $c=c^{*}=2\sqrt{d_{2}(\beta-\gamma)}$, equation (\ref{eq2.1}) has a unique real root
\[
\lambda^{*}=\frac{c^{*}}{2d_{2}}=\sqrt{\frac{\beta-\gamma}{d_{2}}}.
\]

Now we define four non-negative, continuous and bounded functions in $\mathbb{R}$.
\[
\begin{split}
&\overline{S}(\xi):=S_{-\infty},\\
&\overline{I}(\xi):=\min\bigg\{-L_{1}\xi e^{\lambda^*\xi}, M\bigg\}=\left\{ {\begin{array}{l}-L_{1}\xi e^{\lambda^*\xi},\quad\quad\quad\quad\quad\quad\;\xi\leq\xi_1,\\
M,\quad\quad\quad\quad\quad\quad\quad\;\;\;\;\;\;\;\xi>\xi_1,
 \end{array}} \right.\\
&\underline{S}(\xi):=\max \bigg\{S_{-\infty}\bigg(1-\frac{1}{\epsilon}e^{\epsilon\xi}\bigg),0\bigg\}=\left\{ {\begin{array}{l}S_{-\infty}\big(1-\frac{1}{\epsilon}e^{\epsilon\xi}\big),\quad \quad\;\xi\leq\xi_2,\\
0,\quad\quad\quad\quad\quad\quad\quad\;\;\xi>\xi_2,
 \end{array}} \right.\\
&\underline{I}(\xi):=\max \bigg\{[-L_{1}\xi -L_2(-\xi)^{\frac{1}{2}}]e^{\lambda^*\xi},0\bigg\}=\left\{ {\begin{array}{l}[-L_{1}\xi -L_2(-\xi)^{\frac{1}{2}}]e^{\lambda^*\xi},\;\;\;\xi\leq\xi_3,\\
0,\quad\quad\quad\quad\quad\quad\quad\;\;\;\;\;\;\; \xi>\xi_3,
 \end{array}} \right.
\end{split}
\]
where
\[
M:=\frac{(\beta-\gamma)S_{-\infty}}{\gamma},\quad L_{1}:=eM\lambda^{*},\quad \xi_{1}:=-\frac{1}{\lambda^{*}},\quad \xi_{2}:=\frac{1}{\epsilon}\ln\epsilon,\quad \xi_{3}:=-\bigg(\frac{L_2}{L_1}\bigg)^2,
\]
$\epsilon $ and $L_{2}$ are two positive constants to be determined.
\begin{lemma}
\label{lem2.1}
There exist a sufficiently small constant $\epsilon>0$  and a large enough constant $L_{2}>0$, such that the functions $\overline{S}(\xi)$, $\overline{I}(\xi)$, $\underline{S}(\xi)$ and $\underline{I}(\xi)$ satisfy
\begin{equation}
\label{eq2.2}
\begin{split}
c^{*}\overline{I}'(\xi)\geq d_{2}\overline{I}''(\xi)+\frac{\beta\overline{S}(\xi)\overline{I}(\xi)}{\overline{S}(\xi)+\overline{I}(\xi)}-\gamma\overline{I}(\xi)\quad \textmd{for}\quad \xi\neq\xi_{1},
\end{split}
\end{equation}
\begin{equation}
\label{eq2.3}
\begin{split}
-\beta\overline{I}(\xi)\geq-d_{1}\underline{S}''(\xi)+c^{*}\underline{S}'(\xi)\quad \textmd{for} \quad  \xi< \xi_2
\end{split}
\end{equation}
and
\begin{equation}
\label{eq2.4}
\begin{split}
\frac{\beta \underline{S}(\xi)\underline{I}(\xi)}{\underline{S}(\xi)+\underline{I}(\xi)}-\gamma\underline{I}(\xi)\geq-d_{2}\underline{I}''(\xi)+c^{*}\underline{I}'(\xi)\quad \textmd{for}\quad  \xi<\xi_{3}.
\end{split}
\end{equation}
\end{lemma}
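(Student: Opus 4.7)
The plan is to verify each of the three differential inequalities by direct substitution of the explicit formulae for $\overline{S},\overline{I},\underline{S},\underline{I}$, repeatedly exploiting the two identities that come from $c^{*}$ being the critical speed, namely $2d_{2}\lambda^{*}=c^{*}$ and $d_{2}(\lambda^{*})^{2}-c^{*}\lambda^{*}+(\beta-\gamma)=0$.

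For (2.2) I would split into the cases $\xi<\xi_{1}$ and $\xi>\xi_{1}$. On $(-\infty,\xi_{1})$, $\overline{I}(\xi)=-L_{1}\xi e^{\lambda^{*}\xi}$ is a linear combination of $e^{\lambda^{*}\xi}$ and $\xi e^{\lambda^{*}\xi}$, both of which are annihilated by the operator $d_{2}\partial_{\xi}^{2}-c^{*}\partial_{\xi}+(\beta-\gamma)$ at the double root $\lambda^{*}$; hence $d_{2}\overline{I}''-c^{*}\overline{I}'+(\beta-\gamma)\overline{I}\equiv 0$, and (2.2) collapses to the trivial estimate $\beta\overline{I}\ge\beta\overline{S}\,\overline{I}/(\overline{S}+\overline{I})$. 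On $(\xi_{1},\infty)$, $\overline{I}\equiv M$ is constant, so (2.2) reads $\beta S_{-\infty}M/(S_{-\infty}+M)-\gamma M\le 0$, and the choice $M=(\beta-\gamma)S_{-\infty}/\gamma$ turns the left-hand side into zero exactly.

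For (2.3) a direct computation gives $-d_{1}\underline{S}''+c^{*}\underline{S}'=S_{-\infty}(d_{1}\epsilon-c^{*})e^{\epsilon\xi}$ on $(-\infty,\xi_{2})$, so (2.3) is equivalent to $S_{-\infty}(c^{*}-d_{1}\epsilon)e^{\epsilon\xi}\ge\beta\overline{I}(\xi)$. Taking $0<\epsilon<\min\{\lambda^{*},c^{*}/d_{1}\}$ the right-hand side is at most $\beta L_{1}(-\xi)e^{\lambda^{*}\xi}$ for $\xi\le\xi_{1}$ (and at most $\beta M$ for $\xi>\xi_{1}$), and since $\epsilon<\lambda^{*}$ the slower-decaying left-hand side controls the right-hand side on $(-\infty,\xi_{2}]$ once $\epsilon$ is chosen small enough to push $\xi_{2}=\epsilon^{-1}\ln\epsilon$ sufficiently far to the left.

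The main obstacle is (2.4), because the double root rules out a pure exponential sub-solution and forces the $(-\xi)^{1/2}$ correction. Writing $\underline{I}=\phi(\xi)e^{\lambda^{*}\xi}$ with $\phi(\xi)=-L_{1}\xi-L_{2}(-\xi)^{1/2}$, the two identities above kill the zero- and first-order parts, leaving
\begin{equation*}
d_{2}\underline{I}''-c^{*}\underline{I}'+(\beta-\gamma)\underline{I}=d_{2}\phi''(\xi)e^{\lambda^{*}\xi}=\frac{d_{2}L_{2}}{4}(-\xi)^{-3/2}e^{\lambda^{*}\xi}.
\end{equation*}
Since $\beta\underline{S}\,\underline{I}/(\underline{S}+\underline{I})-\gamma\underline{I}=(\beta-\gamma)\underline{I}-\beta\underline{I}^{2}/(\underline{S}+\underline{I})$, inequality (2.4) is equivalent to $\tfrac{d_{2}L_{2}}{4}(-\xi)^{-3/2}e^{\lambda^{*}\xi}\ge\beta\underline{I}^{2}/(\underline{S}+\underline{I})$. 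I would bound the right-hand side using $\underline{I}\le-L_{1}\xi e^{\lambda^{*}\xi}$ and, for $\xi\le\xi_{3}<\xi_{2}$, $\underline{S}(\xi)\ge\underline{S}(\xi_{3})$; as $L_{2}\to\infty$ we have $\xi_{3}=-(L_{2}/L_{1})^{2}\to-\infty$, so $\underline{S}(\xi_{3})\to S_{-\infty}$ and the required estimate reduces to a bound of the form $L_{2}\ge C\,L_{2}^{6}e^{-\lambda^{*}(L_{2}/L_{1})^{2}}$, together with the observation that on $(-\infty,\xi_{3}]$ the function $(-\xi)^{7/2}e^{\lambda^{*}\xi}$ attains its supremum at $\xi_{3}$ once $|\xi_{3}|>7/(2\lambda^{*})$. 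Both conditions are met for all sufficiently large $L_{2}$, since the exponential factor beats any polynomial in $L_{2}$, and this fixes the choice of $L_{2}$ (made after $\epsilon$).
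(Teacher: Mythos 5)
Your proposal is correct and follows essentially the same route as the paper: direct verification using the double-root identities $2d_{2}\lambda^{*}=c^{*}$ and $d_{2}(\lambda^{*})^{2}-c^{*}\lambda^{*}+\beta-\gamma=0$, with the same case splits at $\xi_{1},\xi_{2},\xi_{3}$ and the same reduction of (\ref{eq2.4}) to $\beta\underline{I}^{2}/(\underline{S}+\underline{I})\le \tfrac{d_{2}L_{2}}{4}(-\xi)^{-3/2}e^{\lambda^{*}\xi}$. The only immaterial difference is how the largeness of $L_{2}$ is closed: the paper bounds $(-\xi)^{7/2}e^{\lambda^{*}\xi}$ by a constant independent of $L_{2}$ over $\xi\le 0$, whereas you evaluate its supremum at $\xi_{3}$ and invoke the superexponential decay in $L_{2}$.
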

\begin{proof}
\emph{Proof of (\ref{eq2.2})}. When $\xi<\xi_{1}$, $\overline{I}(\xi)=-L_{1}\xi e^{\lambda^{*}\xi}$. It follows that
\[
\begin{split}
d_{2}\overline{I}''(\xi)-c^{*}\overline{I}'(\xi)+\frac{\beta\overline{S}(\xi)\overline{I}(\xi)}{\overline{S}(\xi)+\overline{I}(\xi)}-\gamma\overline{I}(\xi)&\leq d_{2}\overline{I}''(\xi)-c^{*}\overline{I}'(\xi)+(\beta-\gamma)\overline{I}(\xi)\\
&=\big(d_{2}({\lambda^*})^{2}-c^{*}\lambda^{*}+\beta-\gamma\big)(-L_{1}\xi e^{\lambda^{*}\xi})\\
&=0 \quad \textmd{for} \quad \xi<\xi_{1}.
\end{split}
\]
When $\xi>\xi_{1}$, $\overline{I}(\xi)=M=(\beta-\gamma)S_{-\infty}/\gamma$. We have
\[
\begin{split}
d_{2}\overline{I}''(\xi)-c^{*}\overline{I}'(\xi)+\frac{\beta\overline{S}(\xi)\overline{I}(\xi)}{\overline{S}(\xi)+\overline{I}(\xi)}-\gamma\overline{I}(\xi)&
\leq \frac{\beta S_{-\infty}M}{S_{-\infty}+M}-\gamma M\\
&=0\quad \textmd{for} \quad \xi>\xi_{1}.
\end{split}
\]

\emph{Proof of (\ref{eq2.3})}. Let $\epsilon>0$ be sufficiently small such that $\epsilon^{-1}\ln\epsilon <-(\lambda^{*})^{-1}$, i.e., $\xi_{2}<\xi_{1}$. When $\xi<\xi_{2}$,
\[
\overline{I}(\xi)=-L_{1}\xi e^{\lambda^{*}\xi}
\]
and
\[
\underline{S}(\xi)=S_{-\infty}\bigg(1-\frac{1}{\epsilon}e^{\epsilon\xi}\bigg).
\]
Then inequality (\ref{eq2.3}) is equivalent to
\[
\begin{split}
\beta L_{1}\xi e^{\lambda^{*}\xi}&\geq d_{1}S_{-\infty}\epsilon e^{\epsilon \xi}-c^{*}S_{-\infty}e^{\epsilon \xi}\\
&=S_{-\infty}(d_{1}\epsilon -c^{*})e^{\epsilon \xi} \;\; \textmd{for}\;\;\xi<\xi_{2},
\end{split}
\]
that is,
\[S_{-\infty}(c^{*}-d_{1}\epsilon )\geq-\beta L_{1}\xi e^{(\lambda^{*}-\epsilon )\xi} \;\; \textmd{for}\;\;\xi<\xi_{2},
\]
which holds for sufficiently small $\epsilon\in (0, \min\{c^{*}/d_1, \lambda^{*}\})$.

\emph{Proof of (\ref{eq2.4})}. Let $L_2>0$ be large enough such that $L_{2}>L_{1}\sqrt{-\epsilon ^{-1}\ln\epsilon}$. When $\xi<\xi_{3}$,
\[
\underline{S}(\xi)=S_{-\infty}\bigg(1-\frac{1}{\epsilon}e^{\epsilon\xi}\bigg)
\]
and
\[
\underline{I}(\xi)=\big[-L_{1}\xi-L_{2}(-\xi)^{\frac{1}{2}}\big]e^{\lambda^{*}\xi}.
\]
Then to prove (\ref{eq2.4}) is to show
\[
-\frac{\beta\underline{I}^{2}(\xi)}{\underline{S}(\xi)+\underline{I}(\xi)}\geq-d_{2}\underline{I}''(\xi)+c^{*}\underline{I}'(\xi)-(\beta-\gamma)\underline{I}(\xi)\;\; \textmd{for }\;\xi<\xi_{3},
\]
that is,
\[
\frac{-\beta(-L_{1}\xi-L_{2}(-\xi)^{\frac{1}{2}}\big)^{2}e^{2\lambda^{*}\xi}}{S_{-\infty}\bigg(1-\frac{1}{\epsilon}e^{\epsilon\xi}\bigg)+(-L_{1}\xi-L_{2}(-\xi)^{\frac{1}{2}})e^{\lambda^{*}\xi}}\geq-\frac{d_{2}L_{2}}{4}(-\xi)^{-\frac{3}{2}}e^{\lambda^{*}\xi}\;\; \textmd{for }\;\xi<\xi_{3}.
\]
It suffices to verify that
\begin{equation}
\label{eq2.5}
\begin{split}
d_{2}L_2 S_{-\infty}\bigg(1-\frac{1}{\epsilon}e^{\epsilon\xi}\bigg)
&\geq 4\beta(-L_{1}\xi)^{2}(-\xi)^{\frac{3}{2}}e^{\lambda^{*}\xi}\\
&=4\beta L_{1}^{2}(-\xi)^{\frac{7}{2}}e^{\lambda^{*}\xi}\;\;\; \textmd{for }\;\;\xi<\xi_{3}.
\end{split}
\end{equation}
Let $g(\xi):=\beta L_{1}^{2} (-\xi )^{\frac{7}{2}}e^{\lambda^{*}\xi}$ for $\xi\leq 0$. Then $g(\xi)\in C(-\infty,0]$ and $g(-\infty)=f(0)=0$ and there exists a constant $\tilde{C}>0$ independent of $L_2$, such that
\[
g(\xi)\leq \tilde{C} ~~~\textmd{for}~~~\xi\leq 0.
\]
Therefore, inequality (\ref{eq2.5}) holds for large enough $L_2>0$. This ends the proof.
\end{proof}

We rewrite system (\ref{eq1.3}) in an equivalent form:
\[
\left\{ {\begin{array}{l}
\displaystyle (d_1S''(\xi)-c^{*}S'(\xi)-\beta_{1}S(\xi))+\bigg(\beta_{1}S(\xi)-\frac{\beta S(\xi)I(\xi)}{S(\xi)+I(\xi)}\bigg)=0,\\
\displaystyle (d_2I''(\xi)-c^{*}I'(\xi)-\beta_{2}I(\xi))+\bigg((\beta_{2}-\gamma)I(\xi)+\frac{\beta S(\xi)I(\xi)}{S(\xi)+I(\xi)}\bigg)=0.
 \end{array}} \right.
\]
Here we choose $\beta_{1}>\beta$ and $\beta_{2}>\gamma$, such that
\[
a_1(S, I):=\beta_{1}S-\frac{\beta SI}{S+I}
\]
is non-decreasing in $S$ and non-increasing in $I$, while
\[
a_2(S, I):=(\beta_{2}-\gamma)I+\frac{\beta SI}{S+I}
\]
is non-decreasing in $S$ and $I$.

Let $\lambda_{i}^{\pm}$ be the roots of equation $d_{i}\lambda^{2}-c^{*}\lambda-\beta_{i}=0$ $(i=1,2)$,
then we obtain
\[
\lambda_{i}^{\pm}=\frac{c^{*}\pm\sqrt{(c^{*})^{2}+4d_{i}\beta_{i}}}{2d_{i}} \;\;\; \textmd{for} \;\; i=1,2.
\]

Now we introduce a subset of $C(\mathbb{R},\mathbb{R}^{2})$
\[
\Gamma:=\big\{(S(\cdot),I(\cdot))\in B_\mu(\mathbb{R},\mathbb{R}^2):\underline{S}(\xi)\leq S(\xi)\leq S_{-\infty},\;\underline{I}(\xi)\leq I(\xi)\leq \overline{I}(\xi)\big\},
\]
where
\[
\begin{split}
B_\mu(\mathbb{R},\mathbb{R}^2)=&\max\bigg\{\Psi=(\phi(\cdot),\varphi(\cdot))\in C(\mathbb{R},\mathbb{R}^2)\; \bigg| \;\sup_{\xi\in \mathbb{R}}|\phi(\xi)|e^{-\mu|\xi|}<\infty,\;\;\sup_{\xi\in \mathbb{R}}|\varphi(\xi)|e^{-\mu|\xi|}<\infty\bigg\}
\end{split}
\]
equipped with the norm
\[|\Psi|_\mu:=\max\bigg\{\sup_{\xi\in \mathbb{R}}|\phi(\xi)|e^{-\mu|\xi|}, \sup_{\xi\in \mathbb{R}}|\varphi(\xi)|e^{-\mu|\xi|}\bigg\}.
\]
Here the constant $\mu$ satisfies $0<\mu<\min\{-\lambda_{1}^{-},-\lambda_{2}^{-}\}$. Then $B_\mu(\mathbb{R},\mathbb{R}^2)$ is a Banach space with the norm $|\cdot|_\mu$ and $\Gamma$ is uniformly bounded with respect to the norm $|\cdot|_\mu$.

Define a function $f:\Gamma\mapsto C(\mathbb{R})$
\[
\begin{split}
&f(S,I)(\xi):=\left\{ {\begin{array}{l}\displaystyle \frac{\beta S(\xi)I(\xi)}{S(\xi)+I(\xi)},\quad S(\xi)I(\xi)\neq 0,\\
0,\quad\quad \quad\quad\;\;\;\;S(\xi)I(\xi)= 0,
 \end{array}} \right.\\
\end{split}
\]
and an operator
\[
F=(F_{1}(S,I)(\xi),F_{2}(S,I)(\xi)):\Gamma\mapsto C(\mathbb{R},\mathbb{R}^{2}),
\]
where
\[
F_{i}(S,I)(\xi)=\frac{1}{\Lambda_{i}}\bigg(\int_{-\infty}^{\xi}e^{\lambda_{i}^{-}(\xi-y)}h_{i}(S,I)(y)dy+\int_{\xi}^{\infty}e^{\lambda_{i}^{+}(\xi-y)}h_{i}(S,I)(y)dy\bigg)\;\;\;\;\textmd{for}\;\; i=1,2,
\]
\[
\Lambda_{i}=d_{i}(\lambda_{i}^{+}-\lambda_{i}^{-})=\sqrt{(c^{*})^{2}+4d_{i}\beta_{i}},
\]
\[
h_{1}(S,I)(\xi)=\beta_{1}S(\xi)-f(S,I)(\xi),
\]
and
\[h_{2}(S,I)(\xi)=(\beta_{2}-\gamma)I(\xi)+f(S,I)(\xi).
\]

 Next we will prove that the operator $F$ satisfies the conditions of Schauder's fixed point theorem.
\begin{lemma}
\label{lem2.2}
The operator $F:\Gamma\mapsto\Gamma$ is completely continuous with respect to $|\cdot|_\mu$ in $B_\mu(\mathbb{R},\mathbb{R}^2)$.
\end{lemma}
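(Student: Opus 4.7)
The plan is to verify the three hypotheses of Schauder's fixed point theorem in the Banach space $(B_\mu(\mathbb R,\mathbb R^2),|\cdot|_\mu)$: that $F(\Gamma)\subseteq\Gamma$, that $F$ is continuous with respect to $|\cdot|_\mu$, and that $F(\Gamma)$ is relatively compact.

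\textbf{Step 1: Invariance $F(\Gamma)\subseteq\Gamma$.} By construction, $F_i(S,I)$ is the unique bounded solution of the linear ODE
\[
d_i u''(\xi)-c^*u'(\xi)-\beta_i u(\xi)+h_i(S,I)(\xi)=0,
\]
with the explicit Green-function representation via $\lambda_i^\pm$. Since $\beta_1>\beta$ and $\beta_2>\gamma$ were chosen precisely so that $a_1(S,I)$ is non-decreasing in $S$ and non-increasing in $I$, while $a_2(S,I)$ is non-decreasing in both variables, I would deduce the four bounds by a comparison argument. For the upper bound on $F_1$, one checks by direct computation that $F_1(S_{-\infty},0)\equiv S_{-\infty}$ and uses $h_1(S,I)\le \beta_1S_{-\infty}$. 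For the upper bound on $F_2$, one plugs $\overline I$ into the integral expression, integrates by parts twice to reproduce the differential operator $d_2\overline I''-c^*\overline I'-\beta_2\overline I$, and invokes inequality (2.2) together with the monotonicity of $a_2$ (boundary terms at $\xi_1$ cancel because $\overline I$ is $C^1$ there with $\overline I'(\xi_1^-)=\overline I'(\xi_1^+)=0$, or one carefully handles the jump in the second derivative). The lower bounds $F_1\ge\underline S$ and $F_2\ge\underline I$ follow analogously from (2.3) and (2.4), noting that $\underline S$ and $\underline I$ vanish beyond $\xi_2$ and $\xi_3$ respectively.

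\textbf{Step 2: Continuity.} For $(S_n,I_n)\to(S,I)$ in $\Gamma$ under $|\cdot|_\mu$, the nonlinearity $f(S,I)$ is Lipschitz on the set where $\underline S\le S$ and $\underline I\le I$ jointly ensure $S+I$ is controlled from below on compacts (and $f$ is extended by $0$ where the product vanishes). Then each $h_i$ is Lipschitz on $\Gamma$, and
\[
|F_i(S_n,I_n)(\xi)-F_i(S,I)(\xi)|\le\frac{C}{\Lambda_i}\left(\int_{-\infty}^\xi e^{\lambda_i^-(\xi-y)}\Delta_n(y)\,dy+\int_\xi^\infty e^{\lambda_i^+(\xi-y)}\Delta_n(y)\,dy\right),
\]
where $\Delta_n(y)=|S_n(y)-S(y)|+|I_n(y)-I(y)|$. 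Multiplying by $e^{-\mu|\xi|}$ and using $0<\mu<\min\{-\lambda_1^-,-\lambda_2^-\}$, the exponential kernels dominate the weight $e^{\mu|y-\xi|}$, so a direct estimate gives $|F(S_n,I_n)-F(S,I)|_\mu\to 0$.

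\textbf{Step 3: Compactness.} Differentiating the Green-function representation under the integral shows that for $(S,I)\in\Gamma$ the derivatives $(F_i(S,I))'$ are uniformly bounded on $\mathbb R$, so $F(\Gamma)$ is equicontinuous. Combined with the uniform pointwise bounds from $\Gamma$, the weight $e^{-\mu|\xi|}$ forces the images to be uniformly small outside large compact sets in the weighted seminorm. A standard Arzel\`a--Ascoli plus diagonal extraction then yields convergence of any sequence in $F(\Gamma)$ in $|\cdot|_\mu$, proving relative compactness.

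\textbf{Main obstacle.} The substantive step is invariance, particularly the two lower bounds, because $\underline S$ and $\underline I$ are only piecewise smooth with corners at $\xi_2$ and $\xi_3$, and the inequalities (2.3) and (2.4) hold only on $(-\infty,\xi_2)$ and $(-\infty,\xi_3)$. One must split the integrals at these corner points, discard the right tails where the sub-solutions vanish, and carefully track boundary contributions in the integration by parts; additionally, one must verify that wherever the denominator $S+I$ could be delicate (near points where $\underline S$ or $\underline I$ vanishes), the extension of $f$ by zero keeps the comparison valid.
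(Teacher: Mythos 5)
Your proposal follows essentially the same route as the paper's proof: invariance via the Green-function representation combined with the differential inequalities (\ref{eq2.2})--(\ref{eq2.4}) and the monotonicity of $a_1,a_2$ (with the integrals split at $\xi_1,\xi_2,\xi_3$ and the favorable boundary terms from the corners discarded), continuity via a Lipschitz estimate on $h_i$ absorbed by the exponential kernels under the weight $e^{-\mu|\xi|}$, and compactness via uniform derivative bounds, Arzel\`a--Ascoli on compact intervals, and the decay of the weight at infinity. One small caution: in Step 2 you should not rely on $S+I$ being bounded below on compacts (both components of elements of $\Gamma$ may vanish simultaneously for large $\xi$); instead use that $(S,I)\mapsto \beta SI/(S+I)$, extended by zero, is globally Lipschitz on the closed positive quadrant because its partial derivatives $\beta I^2/(S+I)^2$ and $\beta S^2/(S+I)^2$ are bounded by $\beta$ --- which is precisely the estimate the paper invokes.
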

\begin{proof}
We will divide the proof into the following three steps.

 \emph{Step 1. We prove that $F$ maps $\Gamma$ into $\Gamma$}.

For any $((S(\xi),I(\xi))\in \Gamma$,  we only need to show
\[
\underline{S}(\xi)\leq F_{1}(S,I)(\xi)\leq S_{-\infty}\;\;\textmd{and}\;\;\underline{I}(\xi)\leq F_{2}(S,I)(\xi)\leq \overline{I}(\xi)  ~~~\textmd{for}~~~\xi\in\mathbb{R}.
\]
Let  $(S(\xi),I(\xi))\in \Gamma$, then we obtain
\[
\begin{split}
F_{1}(S,I)(\xi)&=\frac{1}{\Lambda_{1}}\bigg[\int_{-\infty}^{\xi}e^{\lambda_{1}^{-}(\xi-y)}(\beta_{1}S(y)-f(S,I)(y))dy+\int_{\xi}^{\infty}e^{\lambda_{1}^{+}(\xi-y)}(\beta_{1}S(y)-f(S,I)(y))dy\bigg]\\
&\leq\frac{1}{\Lambda_{1}}\bigg(\int_{-\infty}^{\xi}e^{\lambda_{1}^{-}(\xi-y)}\beta_{1}S(y)dy+\int_{\xi}^{\infty}e^{\lambda_{1}^{+}(\xi-y)}\beta_{1}S(y)dy\bigg)\\
&\leq\frac{\beta_{1}S_{-\infty}}{\Lambda_{1}}\bigg(\int_{-\infty}^{\xi}e^{\lambda_{1}^{-}(\xi-y)}dy+\int_{\xi}^{\infty}e^{\lambda_{1}^{+}(\xi-y)}dy\bigg)\\
&=\frac{\beta_{1}S_{-\infty}}{\Lambda_{1}}\bigg(\frac{1}{\lambda_{1}^{+}}-\frac{1}{\lambda_{1}^{-}}\bigg)\\
&=S_{-\infty}\;\; \;\textmd{for }\;\; \xi\in \mathbb{R}.
\end{split}
\]
When $\xi<\xi_{2}$, we deduce from Lemma \ref{lem2.1} and the definition of $\underline{S}(\xi)$ that
\begin{equation}
\label{eq2.6}
\begin{split}
F_{1}(S,I)(\xi)&=\frac{1}{\Lambda_{1}}\bigg[\int_{-\infty}^{\xi}e^{\lambda_{1}^{-}(\xi-y)}(\beta_{1}S(y)-f(S,I)(y))dx+\int_{\xi}^{\infty}e^{\lambda_{1}^{+}(\xi-y)}(\beta_{1}S(y)-f(S,I)(y))dy\bigg]\\
&\geq\frac{1}{\Lambda_{1}}\bigg[\int_{-\infty}^{\xi}e^{\lambda_{1}^{-}(\xi-y)}(\beta_{1}\underline{S}-\beta\overline{I})(y)dx+\int_{\xi}^{\infty}e^{\lambda_{1}^{+}(\xi-y)}(\beta_{1}\underline{S}-\beta\overline{I})(y)dy\bigg]\\
&\geq\frac{1}{\Lambda_{1}}\bigg[\int_{-\infty}^{\xi}e^{\lambda_{1}^{-}(\xi-y)}(\beta_{1}\underline{S}-d_{1}\underline{S}''+c^{*}\underline{S}')(y)dy+\int_{\xi}^{\xi_{2}}e^{\lambda_{1}^{+}(\xi-y)}(\beta_{1}\underline{S}-d_{1}\underline{S}''+c^{*}\underline{S}')(y)dy\bigg]\\ &=\frac{1}{\Lambda_{1}}\bigg\{\int_{-\infty}^{\xi}e^{\lambda_{1}^{-}(\xi-y)}\bigg[\beta_{1}S_{-\infty}+\frac{S_{-\infty}}\epsilon(d_{1}\epsilon^{2}-c^{*}\epsilon-\beta_{1})e^{\epsilon y}\bigg]dy\\
&\quad+\int_{\xi}^{\xi_{2}}e^{\lambda_{1}^{+}(\xi-y)}\bigg[\beta_{1}S_{-\infty}+\frac{S_{-\infty}}\epsilon(d_{1}\epsilon^{2}-c^{*}\epsilon-\beta_{1})e^{\epsilon y}\bigg]dy\bigg\}\\
&=S_{-\infty}\bigg(1-\frac{1} \epsilon e^{\epsilon \xi}\bigg)+\frac{d_{1}\epsilon S_{-\infty}}{\Lambda_{1}}e^{\lambda_1^{+}(\xi-\xi_{2})}\\
&\geq S_{-\infty}\bigg(1-\frac{1} \epsilon e^{\epsilon \xi}\bigg).
\end{split}
\end{equation}
When $\xi>\xi_{2}$, we get
\begin{equation}
\label{eq2.7}
\begin{split}
F_{1}(S,I)(\xi)&\geq\frac{1}{\Lambda_{1}}\bigg[\int_{-\infty}^{\xi}e^{\lambda_{1}^{-}(\xi-y)}(\beta_{1}-\beta)S(y)dy+\int_{\xi}^{\infty}e^{\lambda_{1}^{+}(\xi-y)}(\beta_{1}-\beta)S(y)dy\bigg]\\
&\geq\frac{1}{\Lambda_{1}}\bigg[\int_{-\infty}^{\xi}e^{\lambda_{1}^{-}(\xi-y)}(\beta_{1}-\beta)\underline{S}(y)dy+\int_{\xi}^{\infty}e^{\lambda_{1}^{+}(\xi-y)}(\beta_{1}-\beta)\underline{S}(y)dy\bigg]\\
&\geq0.
\end{split}
\end{equation}
Combining (\ref{eq2.6}), (\ref{eq2.7}) and the continuities of $\underline{S}(\xi)$ and $F_{1}(S,I)(\xi)$ in $\mathbb{R}$,
we conclude that
\[
F_{1}(S,I)(\xi)\geq\underline{S}(\xi) \;\; \;\textmd{for }\;\;\xi\in\mathbb{R}.
\]
When $\xi < \xi_{3}$, in view of Lemma \ref{lem2.1} and the definition of $\underline{I}(\xi)$, we infer that
\begin{equation}
\label{eq2.8}
\begin{split}
F_{2}(S,I)(\xi)&=\frac{1}{\Lambda_{2}}\bigg\{\int_{-\infty}^{\xi}e^{\lambda_{2}^{-}(\xi-y)}((\beta_{2}-\gamma)I(y)+f(S,I)(y))dy
+\int_{\xi}^{\infty}e^{\lambda_{2}^{+}(\xi-y)}((\beta_{2}-\gamma)I(y)+f(S,I)(y))dy\bigg\}\\
&\geq\frac{1}{\Lambda_{2}}\bigg\{\int_{-\infty}^{\xi}e^{\lambda_{2}^{-}(\xi-y)}\bigg[(\beta_{2}-\gamma)\underline{I}+\frac{\beta\underline{S}\;\underline{I}}{\underline{S}+\underline{I}}\bigg](y)dy+\int_{\xi}^{\xi_{3}}e^{\lambda_{2}^{+}(\xi-y)}\bigg[(\beta_{2}-\gamma)\underline{I}+\frac{\beta\underline{S}\;\underline{I}}{\underline{S}+\underline{I}}\bigg](y)dy\bigg\}\\
&\geq\frac{1}{\Lambda_{2}}\bigg[\int_{-\infty}^{\xi}e^{\lambda_{2}^{-}(\xi-y)}(\beta_{2}\underline{I}-d_{2}\underline{I}''+c^{*}\underline{I}')(y)dy+\int_{\xi}^{\xi_{3}}e^{\lambda_{2}^{+}(\xi-y)}(\beta_{2}\underline{I}-d_{2}\underline{I}''+c^{*}\underline{I}')(y)dy\bigg]\\
&=\frac{1}{\Lambda_{2}}\bigg\{\int_{-\infty}^{\xi}e^{\lambda_{2}^{-}(\xi-y)}\bigg[(d_{2}{(\lambda^{*}})^{2}-c^{*}\lambda^{*}-\beta_{2})L_{1}ye^{\lambda^{*}y}+L_{2}(d_{2}{(\lambda^{*}})^{2}-c^{*}\lambda^{*}-\beta_{2})(-y)^{\frac{1}{2}}e^{\lambda^{*}y}\\
&\quad-\frac{1}{4}d_{2}L_{2}(-y)^{-\frac{3}{2}}e^{\lambda^{*}y}\bigg]dy\\
&\quad+\int_{\xi}^{\xi_{3}}e^{\lambda_{2}^{+}(\xi-y)}\bigg[(d_{2}{(\lambda^{*}})^{2}-c^{*}\lambda^{*}-\beta_{2})L_{1}ye^{\lambda^{*}y}+L_{2}(d_{2}{(\lambda^{*}})^{2}-c^{*}\lambda^{*}-\beta_{2})(-y)^{\frac{1}{2}}e^{\lambda^{*}y}\\
 &\quad-\frac{1}{4}d_{2}L_{2}(-y)^{-\frac{3}{2}}e^{\lambda^{*}y}\bigg]dy\bigg\}\\
&=[-L_{1}\xi-L_{2}(-\xi)^{\frac{1}{2}}]e^{\lambda^{*}\xi}+\frac{L_{1}d_{2}}{2\Lambda_{2}}e^{\lambda^{*}\xi_{3}}e^{\lambda_{2}^{+}(\xi-\xi_{3})}\\
&\geq[-L_{1}\xi-L_{2}(-\xi)^{\frac{1}{2}}]e^{\lambda^{*}\xi}.\\
\end{split}
\end{equation}
When $\xi>\xi_{3}$, $\underline{I}(\xi)=0$. Since $I(\xi)\geq\underline{I}(\xi)$ and $\beta_{2}>\gamma$, we obtain
\[
\begin{split}
F_{2}(S,I)(\xi)&\geq\frac{1}{\Lambda_{2}}\int_{-\infty}^{\xi_{3}}e^{\lambda_{2}^{-}(\xi-y)}\bigg[\frac{\beta\underline{S}\;\underline{I}}{\underline{S}+\underline{I}}+(\beta_{2}-\gamma)\underline{I}\bigg](y)dy\\
&\geq0\;\; \textmd{for} \;\; \xi>\xi_{3},
\end{split}
\]
which together with (\ref{eq2.8}) and the continuities of $\underline{I}(\xi)$ and $F_{2}(S,I)(\xi)$ in $\mathbb{R}$ yields that
\[
F_{2}(S,I)(\xi)\geq\underline{I}(\xi)\;\; \;\textmd{for }\;\;\xi\in\mathbb{R}.
\]
When $\xi<\xi_{1}$, $\overline{I}(\xi)=-L_{1}\xi e^{\lambda^*\xi}$. One can deduce from Lemma \ref{lem2.1} and  the definition of $\overline{I}(\xi)$  that
\begin{equation}
\label{eq2.9}
\begin{split}
 F_{2}(S,I)(\xi)&=\frac{1}{\Lambda_{2}}\bigg\{\int_{-\infty}^{\xi}e^{\lambda_{2}^{-}(\xi-y)}((\beta_{2}-\gamma)I(y)+f(S,I)(y))dy
+\int_{\xi}^{\infty}e^{\lambda_{2}^{+}(\xi-y)}((\beta_{2}-\gamma)I(y)+f(S,I)(y))dy\bigg\}\\
&\leq\frac{1}{\Lambda_{2}}\bigg\{\int_{-\infty}^{\xi}e^{\lambda_{2}^{-}(\xi-y)}\bigg[(\beta_{2}-\gamma)\overline{I}+\frac{\beta\overline{S}\;\overline{I}}{\overline{S}+\overline{I}}\bigg](y)dx+\int_{\xi}^{\xi_{1}}e^{\lambda_{2}^{+}(\xi-y)}\bigg[(\beta_{2}-\gamma)\overline{I}+\frac{\beta\overline{S}\;\overline{I}}{\overline{S}+\overline{I}}\bigg](y)dy\\
&\quad+\int_{\xi_1}^{\infty}e^{\lambda_{2}^{+}(\xi-y)}\bigg[(\beta_{2}-\gamma)\overline{I}+\frac{\beta\overline{S}\;\overline{I}}{\overline{S}+\overline{I}}\bigg](y)dy\bigg\}\\
&\leq\frac{1}{\Lambda_{2}}\bigg\{\int_{-\infty}^{\xi}e^{\lambda_{2}^{-}(\xi-y)}(-d_{2}\overline{I}''+c^{*}\overline{I}'+\beta_{2}\overline{I})(y)dy+\int_{\xi}^{\xi_{1}}e^{\lambda_{2}^{+}(\xi-y)}(-d_{2}\overline{I}''+c^{*}\overline{I}'+\beta_{2}\overline{I})(y)dy\\
&\quad+\int_{\xi_{1}}^{\infty}e^{\lambda_{2}^{+}(\xi-y)}(-d_{2}\overline{I}''+c^{*}\overline{I}'+\beta_{2}\overline{I})(y)dy\bigg\}\\
&=\frac{1}{\Lambda_{2}}\bigg\{\int_{-\infty}^{\xi}e^{\lambda_{2}^{-}(\xi-y)}\big[d_{2}(\lambda^{*})^{2}-c^{*}\lambda^{*}-\beta_{2}\big]L_{1}ye^{\lambda^{*}y}dy+\int_{\xi}^{\xi_{1}}e^{\lambda_{2}^{+}(\xi-y)}\big[d_{2}(\lambda^{*})^{2}-c^{*}\lambda^{*}-\beta_{2}\big]L_{1}xe^{\lambda^{*}y}dy\\
&\quad+\int_{\xi_{1}}^{\infty}e^{\lambda_{2}^{+}(\xi-y)}\beta_{2}Mdy\bigg\}\\
&=\frac{1}{\Lambda_{2}}L_{1}d_{2}(\lambda_{2}^{-}-\lambda_{2}^{+})\xi e^{\lambda^{*}\xi}\\
&=-L_{1}\xi e^{\lambda^{*}\xi}\;\;\;\textmd{for}\;\;\; \xi<\xi_{1}.
\end{split}
\end{equation}
When $\xi>\xi_{1}$, $\overline{I}(\xi)=M$. It follows from Lemma \ref{lem2.1} and  the definition of $\overline{I}(\xi)$  that
\begin{equation}
\label{eq2.10}
\begin{split}
 F_{2}(S,I)(\xi)&\leq\frac{1}{\Lambda_{2}}\bigg\{\int_{-\infty}^{\xi_{1}}e^{\lambda_{2}^{-}(\xi-y)}\bigg[(\beta_{2}-\gamma)\overline{I}+\frac{\beta\overline{S}\;\overline{I}}{\overline{S}+\overline{I}}\bigg](y)dy+\int_{\xi_{1}}^{\xi}e^{\lambda_{2}^{-}(\xi-y)}\bigg[(\beta_{2}-\gamma)\overline{I}+\frac{\beta\overline{S}\;\overline{I}}{\overline{S}+\overline{I}}\bigg](y)dy\\
&\quad+\int_{\xi}^{\infty}e^{\lambda_{2}^{+}(\xi-y)}\bigg[(\beta_{2}-\gamma)\overline{I}+\frac{\beta\overline{S}\;\overline{I}}{\overline{S}+\overline{I}}\bigg] dy\bigg\}\\
&\leq\frac{1}{\Lambda_{2}}\bigg\{\int_{-\infty}^{\xi_{1}}e^{\lambda_{2}^{-}(\xi-y)}(-d_{2}\overline{I}''+c^{*}\overline{I}'+\beta_{2}\overline{I})(y)dy+\int_{\xi_{1}}^{\xi}e^{\lambda_{2}^{-}(\xi-y)}(-d_{2}\overline{I}''+c^{*}\overline{I}'+\beta_{2}\overline{I})(y)dy\\
&\quad+\int_{\xi}^{\infty}e^{\lambda_{2}^{+}(\xi-y)}(-d_{2}\overline{I}''+c^{*}\overline{I}'+\beta_{2}\overline{I})(y)dy\bigg\}\\
&=\frac{1}{\Lambda_{2}}\bigg\{\int_{-\infty}^{\xi_{1}}e^{\lambda_{2}^{-}(\xi-y)}\big[d_{2}(\lambda^{*})^{2}-c^{*}\lambda^{*}-\beta_{2}\big]L_{1}ye^{\lambda^{*}y}dy+\int_{\xi_{1}}^{\xi}e^{\lambda_{2}^{-}(\xi-y)}\beta_{2}Mdy+\int_{\xi}^{\infty}e^{\lambda_{2}^{+}(\xi-y)}\beta_{2}Mdy\bigg\}\\
&=M\;\;\;\textmd{for}\;\;\; \xi>\xi_{1}.
\end{split}
\end{equation}
By (\ref{eq2.9}), (\ref{eq2.10}) and the continuities of $\overline{I}(\xi)$ and $F_{2}(S,I)(\xi)$  in $\mathbb{R}$, we obtain that
\[
F_{2}(S,I)(\xi)\leq\overline{I}(\xi)\;\; \;\textmd{for }\;\;\xi\in\mathbb{R}.
\]

\emph{Step 2. We prove that $F$ is continuous respect to the norm $|\cdot|_\mu$ in $B_\mu(\mathbb{R},\mathbb{R}^2)$}.

For any $(S_{1},I_{1}), (S_{2},I_{2})\in\Gamma$,  we have
\[
\begin{split}
&|F_{1}(S_{1},I_{1})(\xi)-F_{1}(S_{2},I_{2})(\xi)|e^{-\mu|\xi|}\\
&=\frac{1}{\Lambda_{1}}\bigg[e^{-\mu|\xi|}\int_{-\infty}^{\xi}e^{\lambda_{1}^{-}(\xi-y)}(h_{1}(S_{1},I_{1})(y))-h_{1}(S_{2},I_{2})(y))dy+e^{-\mu|\xi|}\int_{\xi}^{+\infty}e^{\lambda_{2}^{+}(\xi-y)}(h_{1}(S_{1},I_{1})(y))-h_{1}(S_{2},I_{2})(y))dy\bigg]\\
&\leq\frac{1}{\Lambda_{1}}\bigg[\int_{-\infty}^{\xi}e^{-\mu|\xi|}e^{\mu|y|}e^{\lambda_{1}^{-}(\xi-y)}(\beta_{1}+\beta)(|S_{1}(y)-S_{2}(y)|e^{-\mu|y|}+|I_{1}(y)-I_{2}(y)|e^{-\mu|y|})dy\\
&\quad+\int_{\xi}^{+\infty}e^{-\mu|\xi|}e^{\mu|y|}e^{\lambda_{1}^{+}(\xi-y)}(\beta_{1}+\beta)(|S_{1}(y)-S_{2}(y)|e^{-\mu|y|}+|I_{1}(y)-I_{2}(y)|e^{-\mu|y|})dy\bigg]\\
&\leq\frac{\beta_{1}+\beta}{\Lambda_{1}}\bigg(\int_{-\infty}^{\xi}e^{\mu|\xi-y|}e^{\lambda_{1}^{-}(\xi-y)}dx+\int_{\xi}^{+\infty}e^{\mu|\xi-y|}e^{\lambda_{1}^{+}(\xi-y)}dy\bigg)(|S_{1}-S_{2}|_\mu+|I_{1}-I_{2}|_\mu)\\
&\leq\frac{\beta_{1}+\beta}{\Lambda_{1}}\bigg(-\frac{1}{\lambda_{1}^{-}+\mu}+\frac{1}{\lambda_{1}^{+}-\mu}\bigg)(|S_{1}-S_{2}|_\mu+|I_{1}-I_{2}|_\mu),
\end{split}
\]
which implies that $F_{1}$ is continuous with respect to the norm $|\cdot|_\mu$ in $B_\mu(\mathbb{R},\mathbb{R}^2)$.  The continuity of $F_{2}$ can be shown analogously.

\emph{Step 3. We prove that $F$ is compact with respect to the norm $|\cdot|_\mu$ in $B_\mu(\mathbb{R},\mathbb{R}^2)$.}

The proof can be carried out by the similar argument as that in [16, Lemma 3.5] or [21, Lemma 3.7]. For completeness, we give the details here. For any $(S,I)\in\Gamma$, we deduce that
\begin{equation}
\label{eq2.11}
\begin{split}
|F_{1}'(S,I)(\xi)|&=\bigg|\frac{\lambda_{1}^{-}}{\Lambda_{1}}\int_{-\infty}^{\xi}e^{\lambda_{1}^{-}(\xi-y)}h_{1}(S,I)(y)dy+\frac{\lambda_{1}^{+}}{\Lambda_{1}}\int_{\xi}^{\infty}e^{\lambda_{1}^{+}(\xi-y)}h_{1}(S,I)(y)dy\bigg|\\
&\leq -\frac{\lambda_{1}^{-}}{\Lambda_{1}}\int_{-\infty}^{\xi}e^{\lambda_{1}^{-}(\xi-y)}\beta S(y)dy+\frac{\lambda_{1}^{+}}{\Lambda_{1}}\int_{\xi}^{\infty}e^{\lambda_{1}^{+}(\xi-y)}\beta S(y)dy\\
&\leq -\frac{\beta_{1}S_{-\infty}\lambda_{1}^{-}}{\Lambda_{1}}\int_{-\infty}^{\xi}e^{\lambda_{1}^{-}(\xi-y)}dy+\frac{\beta_{1}S_{-\infty}\lambda_{1}^{+}}{\Lambda_{1}}\int_{\xi}^{\infty}e^{\lambda_{1}^{+}(\xi-y)}dy\\
&=\frac{2\beta_{1}S_{-\infty}}{\Lambda_{1}}
\end{split}
\end{equation}
and
\begin{equation}
\label{eq2.12}
\begin{split}
|F_{2}'(S,I)(\xi)|&=\bigg|\frac{\lambda_{2}^{-}}{\Lambda_{2}}\int_{-\infty}^{\xi}e^{\lambda_{2}^{-}(\xi-y)}h_{2}(S,I)(y)dy+\frac{\lambda_{2}^{+}}{\Lambda_{2}}\int_{\xi}^{\infty}e^{\lambda_{2}^{+}(\xi-y)}h_{2}(S,I)(y)dy\bigg|\\
&\leq -\frac{\lambda_{2}^{-}}{\Lambda_{2}}\int_{-\infty}^{\xi}e^{\lambda_{2}^{-}(\xi-y)}(\beta+\beta_{2}-\gamma)I(y)dy+\frac{\lambda_{2}^{+}}{\Lambda_{2}}\int_{\xi}^{\infty}e^{\lambda_{2}^{+}(\xi-y)}(\beta+\beta_{2}-\gamma)I(y)dy \\
&\leq  -\frac{(\beta+\beta_{2}-\gamma)M_{1}\lambda_{2}^{-}}{\Lambda_{2}}\int_{-\infty}^{\xi}e^{\lambda_{2}^{-}(\xi-y)}dy+\frac{(\beta+\beta_{2}-\gamma)M_{1}\lambda_{2}^{+}}{\Lambda_{2}}\int_{\xi}^{\infty}e^{\lambda_{1}^{+}(\xi-y)}dy\\
&=\frac{2(\beta+\beta_{2}-\gamma)M_{1}}{\Lambda_{2}}.
\end{split}
\end{equation}
Hence $F=(F_{1}, F_{2})$ is equi-continuous on any compact interval in $\mathbb{R}$. Moreover,  one can infer from the argument in Step 2 that
\begin{equation}
\label{eq2.13}
\begin{split}
|F_{1}(S,I)(\xi)|\leq S_{-\infty}\;\; \;\textmd{and}\;\;\; |F_{2}(S,I)(\xi)|\leq M\;\;\;\textmd{for}\;\;\;\xi \in \mathbb{ R}.
\end{split}
\end{equation}
Hence, for any $\varepsilon>0$, there exists a positive integer $N$ such that
\begin{equation}
\label{eq2.14}
(|F_{1}(S,I)(\xi)|+|F_{2}(S,I)(\xi)|)e^{-\mu|\xi|}\leq ( S_{-\infty}+M)e^{-\mu|\xi|}<\epsilon\; \;\;\textmd{for}\;\; \;|\xi|>N.
\end{equation}
Applying (\ref{eq2.11})-(\ref{eq2.13}) and  the Arzel\`{a}-Ascoli theorem, one can find finite elements in $F(\Gamma)$ such that there exist a finite $\epsilon$-net of $F(\Gamma)(\xi)$  in sense of supremum norm if they are restricted on $ [-N, N]$, which is also a finite $\epsilon$-net of $F(\Gamma)(\xi)$ on $(-\infty, \infty)$ in sense of the norm $|\cdot|_\mu$ (by (\ref{eq2.14})). The compactness of $F$ with respect to the norm $|\cdot|_\mu$ then follows.  This completes the proof.
\end{proof}

It is easy to see that $\Gamma$ is nonempty, bounded, closed and convex. Then from Lemma \ref{lem2.2} and Schauder's fixed point theorem we conclude that $F$ admits a fixed point $(S_{*}(\xi),I_{*}(\xi))\in\Gamma$, which is a solution of the system
\begin{equation}
\label{eq2.15}
\left\{ {\begin{array}{l}
\displaystyle c^*S_*'(\xi)=d_1 S_*''(\xi)-\frac{\beta S_*(\xi)I_*(\xi)}{S_*(\xi)+I_*(\xi)}, \\
\displaystyle  c^*I_*'(\xi)=d_2 I_*''(\xi)+\frac{\beta S_*(\xi)I_*(\xi)}{S_*(\xi)+I_*(\xi)}-\gamma I_*(\xi).
 \end{array}} \right.
\end{equation}

Therefore, we have the following existence result for (\ref{eq1.2}).
\begin{proposition}
\label{the2.1}
Suppose that $\mathcal{R}_0>1$ and $c=c^{*}$. Then (\ref{eq1.2}) has a non-negative traveling wave solution $(S_*(\xi),I_*(\xi))$ satisfying
\begin{equation}
\label{eq2.16}
\begin{split}
 \underline{S}(\xi)\leq S_{*}(\xi)\leq S_{-\infty}\;\;and \;\;\underline{I}(\xi)\leq I_{*}(\xi)\leq\overline{I}(\xi)
\end{split}
\end{equation}
for $\xi\in \mathbb{R}$.
\end{proposition}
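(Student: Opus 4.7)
The plan is to derive Proposition 2.1 as a direct consequence of Lemma 2.2 via Schauder's fixed point theorem, followed by a routine verification that a fixed point of $F$ solves the traveling wave system (2.15).

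First, I would check the geometric hypotheses on $\Gamma$. The set is nonempty: for instance, the pair $(\overline{S}(\xi), \overline{I}(\xi)) = (S_{-\infty}, \min\{-L_1\xi e^{\lambda^*\xi}, M\})$ lies in $\Gamma$, since $\underline{S}\le \overline{S}\le S_{-\infty}$ and $\underline{I}\le \overline{I}$ by construction (one verifies the latter by noting that for $\xi\le \xi_1$ the extra term $-L_2(-\xi)^{1/2}e^{\lambda^*\xi}$ only reduces the bound, and for $\xi>\xi_3$ the lower bound is $0$). Convexity is immediate from the pointwise definition of the inequalities in $\Gamma$. Closedness with respect to $|\cdot|_\mu$ follows from the fact that $|\cdot|_\mu$-convergence implies pointwise convergence on $\mathbb{R}$, so pointwise inequalities pass to the limit. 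Boundedness in $|\cdot|_\mu$ follows because every element is dominated by $\max\{S_{-\infty},M\}$ in sup norm, hence even more so after multiplication by the decaying weight $e^{-\mu|\xi|}$.

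Next, Lemma 2.2 asserts that $F:\Gamma\to \Gamma$ is completely continuous with respect to $|\cdot|_\mu$. Schauder's fixed point theorem therefore produces a fixed point $(S_*,I_*)\in \Gamma$. Membership in $\Gamma$ immediately yields the two-sided bounds (2.16), and in particular $S_*(\xi)\ge \underline{S}(\xi)\ge 0$ and $I_*(\xi)\ge \underline{I}(\xi)\ge 0$, so the solution is non-negative.

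It then remains to confirm that the fixed-point identity $(S_*,I_*)=F(S_*,I_*)$ implies the ODE system (2.15). The functions $\lambda_i^\pm$ are precisely the roots of $d_i\lambda^2-c^*\lambda-\beta_i=0$, so the integral kernels in the definition of $F_i$ are Green's functions for the operator $\mathcal{L}_i u:= d_i u''-c^* u'-\beta_i u$. Since $S_*$ and $I_*$ are bounded and continuous, and since $h_1(S_*,I_*)$ and $h_2(S_*,I_*)$ are bounded and continuous (note $f(S_*,I_*)$ is continuous because the singular set $\{S_* I_*=0\}$ is handled by the zero extension), one differentiates the integral representations twice under the integral sign, using the standard calculation
\[
d_i F_i''(S_*,I_*)(\xi)-c^* F_i'(S_*,I_*)(\xi)-\beta_i F_i(S_*,I_*)(\xi) = -h_i(S_*,I_*)(\xi).
\]
Substituting $F_i(S_*,I_*)=S_*$ (resp.\ $I_*$) and recalling the definitions of $h_1,h_2$, one recovers exactly (2.15) after cancelling the $\beta_1 S_*$ and $(\beta_2-\gamma)I_*$ terms on the two sides. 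This routine calculation is the only place where care is needed; since Lemma 2.2 has already done the heavy lifting, no further obstacle arises, and Proposition 2.1 follows.
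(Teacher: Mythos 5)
Your proposal is correct and follows exactly the route the paper takes: the paper derives Proposition 2.1 in one sentence by citing the properties of $\Gamma$, Lemma 2.2, and Schauder's fixed point theorem, and noting that a fixed point of $F$ solves (2.15). You merely fill in the routine verifications (nonemptiness, convexity, closedness of $\Gamma$, and the Green's-function computation showing $d_iF_i''-c^*F_i'-\beta_iF_i=-h_i$) that the paper leaves implicit, and these are all carried out correctly.
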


\section{Asymptotic boundary of the critical traveling wave solution}
\setcounter {equation}{0}
In this section, we will derive the asymptotic boundary of the critical traveling wave solution for (\ref{eq1.2}), whose existence is established in Section 2.

\begin{proposition}
\label{pro3.1}
Suppose that $(S_*(\xi),I_*(\xi))$ is a critical traveling wave solution of (\ref{eq1.2}), then it has the following asymptotic boundary
\begin{itemize}
\item[(1)] $S_{*}(\xi)\rightarrow S_{-\infty},~~I_{*}(\xi)\rightarrow 0,~~I_*(\xi)= \mathcal{O}(-\xi e^{\lambda^*\xi}) ~~~   \textmd{as}~~~ \xi\rightarrow-\infty$,
\item[(2)] $\lim_{\xi\rightarrow \infty} S_{*}(\xi) := S_{\infty}\;\;\textmd{exists}, \;\; S_{\infty}<S_{*}(-\infty)=S_{-\infty}\;\; and\;\; I_{*}(\xi)\rightarrow 0~~   \textmd{as}~~~ \xi\rightarrow\infty.$
\end{itemize}
\end{proposition}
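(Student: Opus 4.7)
For part (1), I would pass to the limit $\xi\to-\infty$ in the pointwise bounds $\underline{S}(\xi)\le S_*(\xi)\le S_{-\infty}$ and $\underline{I}(\xi)\le I_*(\xi)\le \overline{I}(\xi)$ supplied by Proposition 2.1. A direct computation on the explicit formulas for $\underline{S},\underline{I},\overline{I}$ shows $\underline{S}(\xi)\to S_{-\infty}$ and $\underline{I}(\xi),\overline{I}(\xi)\to 0$, so the squeeze theorem yields $S_*(-\infty)=S_{-\infty}$ and $I_*(-\infty)=0$. The decay rate $I_*(\xi)=\mathcal{O}(-\xi e^{\lambda^*\xi})$ is recorded immediately by $I_*(\xi)\le\overline{I}(\xi)=-L_1\xi e^{\lambda^*\xi}$ for $\xi\le\xi_1$.

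For part (2) I would proceed in three steps. First, I show $S_*$ is non-increasing on $\mathbb{R}$. Multiplying the first equation of (\ref{eq2.15}) by $e^{-c^*\xi/d_1}$ gives
\[
\bigl(e^{-c^*\xi/d_1}S_*'(\xi)\bigr)'=\frac{1}{d_1}\,e^{-c^*\xi/d_1}\cdot\frac{\beta S_*(\xi)I_*(\xi)}{S_*(\xi)+I_*(\xi)}\ge 0,
\]
so $e^{-c^*\xi/d_1}S_*'(\xi)$ is non-decreasing in $\xi$. Since $S_*'$ is bounded on $\mathbb{R}$ (a by-product of the estimates (\ref{eq2.11})--(\ref{eq2.12}) applied to the fixed point $(S_*,I_*)$) and $e^{-c^*\xi/d_1}\to 0$ as $\xi\to\infty$, the limit of this monotone function at $+\infty$ must be $\le 0$; hence $S_*'(\xi)\le 0$ everywhere. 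Combined with the uniform bound $S_*\le S_{-\infty}$, this delivers the existence of $S_\infty:=\lim_{\xi\to\infty}S_*(\xi)\in[0,S_{-\infty}]$.

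Second, I upgrade this to the strict inequality $S_\infty<S_{-\infty}$. Boundedness of $S_*,I_*,S_*',I_*'$ together with the ODE system makes $S_*''$ and $I_*''$ bounded, so $S_*'$ and $I_*'$ are uniformly continuous on $\mathbb{R}$. A Barbalat-type argument, using the finite limits of $S_*$ and $I_*$ at $-\infty$, then forces $S_*'(-\infty)=I_*'(-\infty)=0$; likewise $S_*'(+\infty)=0$ once $S_\infty$ exists. Integrating the $S$-equation over $(-\infty,\xi)$ and letting $\xi\to\infty$ yields
\[
c^*(S_{-\infty}-S_\infty)=\int_{-\infty}^{\infty}\frac{\beta S_*(y)I_*(y)}{S_*(y)+I_*(y)}\,dy,
\]
which is strictly positive because $\underline{I}(\xi)>0$ on $(-\infty,\xi_3)$ forces $I_*>0$ there; hence $S_\infty<S_{-\infty}$.

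Third, for $I_*(+\infty)=0$, I sum the two equations in (\ref{eq2.15}) and integrate from $-\infty$ to $\xi$ to obtain
\[
\gamma\int_{-\infty}^{\xi}I_*(y)\,dy=c^*\bigl(S_{-\infty}-S_*(\xi)-I_*(\xi)\bigr)+d_1S_*'(\xi)+d_2I_*'(\xi),
\]
whose right-hand side is uniformly bounded in $\xi$; therefore $I_*\in L^1(\mathbb{R})$. Uniform continuity of $I_*$ (inherited from $I_*'\in L^\infty$) together with $I_*\in L^1(\mathbb{R})$ then forces $I_*(\xi)\to 0$ as $\xi\to\infty$ by the standard lemma that an $L^1$, uniformly continuous function on $\mathbb{R}$ vanishes at infinity. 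The main technical obstacle throughout is the repeated Barbalat-type step: one has to justify that $S_*'$ and $I_*'$ vanish at $\pm\infty$ so that the integration of the ODEs over the whole line is legitimate; once that is in hand, the three assertions of part (2) follow by routine manipulations.
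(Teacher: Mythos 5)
Your proof is correct and follows the same overall skeleton as the paper's: squeeze theorem against the bounds (\ref{eq2.16}) for part (1); the sign of $\bigl(e^{-c^{*}\xi/d_{1}}S_*'(\xi)\bigr)'$ to get monotonicity of $S_*$ and hence existence of $S_\infty$; and integrability of $I_*$ plus boundedness of $I_*'$ to get $I_*(+\infty)=0$. You differ in three supporting steps, each defensible and in places cleaner. First, for the limits $S_*'(\pm\infty)=I_*'(\pm\infty)=0$ you use a Barbalat-type argument from boundedness of the second derivatives, whereas the paper invokes L'Hospital's rule in (\ref{eq3.2}); your sequencing, which establishes $S_*'(+\infty)=0$ only after $S_\infty$ is known to exist, is the more careful of the two. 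Second, for the strict inequality $S_\infty<S_{-\infty}$ you integrate the $S$-equation over all of $\mathbb{R}$ and observe that $\int_{\mathbb{R}}\beta S_*I_*/(S_*+I_*)\,d\xi>0$, while the paper instead reads off $S_*'(\xi)<0$ for $\xi<\min\{\xi_2,\xi_3\}$ directly from the integral representation (\ref{eq3.4}); both rest on the same positivity of $\underline{S}$ and $\underline{I}$ near $-\infty$. Third, you obtain $I_*\in L^1(\mathbb{R})$ by summing the two equations before integrating, which fuses the paper's two separate integrations (\ref{eq3.7}) and (\ref{eq3.8}) into a single identity. None of these changes the substance; the one hypothesis to make explicit in a final write-up is the boundedness of $S_*'$ and $I_*'$ on all of $\mathbb{R}$, which, as you note, follows from (\ref{eq2.11})--(\ref{eq2.12}) applied to the fixed point.
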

\begin{proof}
By squeeze theorem and (\ref{eq2.16}), we obtain
\begin{equation}
\label{eq3.1}
S_{*}(\xi)\rightarrow S_{-\infty},~~I_{*}(\xi)\rightarrow 0 ~~\textmd{and} ~~I_*(\xi)= \mathcal{O}(-\xi e^{\lambda^*\xi}) ~~~   \textmd{as}~~~ \xi\rightarrow-\infty.
\end{equation}
Moreover, it follows from  (\ref{eq2.15}), (\ref{eq2.16}) and L'Hospital's rule that
\begin{equation}
\label{eq3.2}
\begin{split}
S_{*}'(\xi)\rightarrow0,~~~I_{*}'(\xi)\rightarrow0~~~~\textmd{as~}~~~\xi\rightarrow\pm\infty.
\end{split}
\end{equation}
By the first equation in (\ref{eq2.15}) one can derive
\begin{equation}
\label{eq3.3}
\begin{split}
\bigg(e^{-\frac{c^{*}}{d_{1}}\xi}S_{*}'(\xi)\bigg)'=e^{-\frac{c^{*}}{d_{1}}\xi}\frac{\beta S_{*}(\xi)I_{*}(\xi)}{d_{1}(S_{*}(\xi)+I_{*}(\xi))}.
\end{split}
\end{equation}
Integrating (\ref{eq3.3}) over $(\xi, \infty)$, using (\ref{eq3.2}) and noting that $S_{*}(\xi)$ and $I_{*}(\xi)$ are non-negative, we then get
\begin{equation}
\label{eq3.4}
S_{*}'(\xi)=-e^{-\frac{c^{*}}{d_{1}}\xi}\int_{\xi}^{\infty}e^{-\frac{c^{*}}{d_{1}}v}\frac{\beta S_{*}(v)I_{*}(v)}{d_{1}(S_{*}(v)+I_{*}(v))}dv\leq 0 \;\; \textmd{for} \;\; \xi\in \mathbb{R},
\end{equation}
which implies that $S_{*}(\xi)$ is monotonically decreasing in $\mathbb{R}$. Additionally, $S_{*}(\xi)$ is bounded in $\mathbb{R}$ (by (\ref{eq2.16})). Then it follows that
\begin{equation}
\label{eq3.5}
\lim_{\xi\rightarrow \infty}S_{*}(\xi):=S_{\infty}\;\;\textmd{exists}.
\end{equation}
Furthermore, since $S_*(\xi)\geq \underline{S}(\xi)>0$ and $I_*(\xi)\geq \underline{I}(\xi)>0$ for $\xi<\min\{\xi_2, \xi_3\}$, we have from (\ref{eq3.4}) that
\begin{equation}
\label{eq3.6}
S_*'(\xi)<0\;\; \textmd{for} \;\;\xi<\min\{\xi_2, \xi_3\}.
\end{equation}
Then one can infer that
\[
S_{\infty}<S_{*}(-\infty)=S_{-\infty}.
\]

Now integrating the first equation in (\ref{eq2.15}) over $\mathbb{R}$ and using (\ref{eq3.1}), (\ref{eq3.2}) and (\ref{eq3.5}) give that
\begin{equation}
\label{eq3.7}
\begin{split}
\int_{-\infty}^{\infty}\frac{S_{*}(v)I_{*}(v)}{S_{*}(v)+I_{*}(v)}dv=\frac{c^{*}(S_{-\infty}-S_{\infty})}{\beta}.
\end{split}
\end{equation}
Another integration of the second equation in (\ref{eq2.15}) from $-\infty$ to $\xi$ leads to
\begin{equation}
\label{eq3.8}
\begin{split}
\int_{-\infty}^{\xi}I_{*}(v)dv=\frac{d_{2}}{\gamma}I_{*}'(\xi)-\frac{c^{*}}{\gamma}I_{*}(\xi)+\frac{\beta}{\gamma}\int_{-\infty}^{\xi}\frac{S_{*}(v)I_{*}(v)}{S_{*}(v)+I_{*}(v)}dv,
\end{split}
\end{equation}
where we have used (\ref{eq3.1}) and (\ref{eq3.2}).  In view of (\ref{eq2.16}), (\ref{eq3.2}), (\ref{eq3.7}) and (\ref{eq3.8}), we have
\begin{equation}
\label{eq3.9}
\int_{-\infty}^{\infty}I_{*}(v)dv<\infty.
\end{equation}
Since $I_{*}'(\xi)$ is bounded in $\mathbb{R}$ (by (\ref{eq3.2})), one can obtain that
\begin{equation}
\label{eq3.10}
\begin{split}
I_{*}(\xi)\rightarrow0~~~\textmd{as}~~~\xi\rightarrow\infty.
\end{split}
\end{equation}
Moreover, from (\ref{eq3.2}), (\ref{eq3.7}), (\ref{eq3.8}) and (\ref{eq3.10}) we deduce that
\begin{equation}
\label{eq3.11}
\begin{split}
\int_{-\infty}^{\infty}I_{*}(v)dv=\frac{\beta}{\gamma}\int_{-\infty}^{\infty}\frac{S_{*}(v)I_{*}(v)}{S_{*}(v)+I_{*}(v)}dv=\frac{c^{*}(S_{-\infty}-S_{\infty})}{\gamma}.
\end{split}
\end{equation}
The proof is completed.
\end{proof}

\section{Positivity and upper bound of the critical traveling wave solution}
\setcounter {equation}{0}
In the section, we will derive the positivity and upper bound  of the critical traveling wave solution and finish the proof of Theorem \ref{the1.1}.
\begin{proposition}
\label{pro4.1}
Let $(S_*(\xi),I_*(\xi))$ be a critical traveling wave solution of (\ref{eq1.2}), then we have
\[
0<S_*(\xi)<S_{-\infty}\; \textmd{and}\;\ 0<I_*(\xi)<\min \bigg\{\frac{(\beta-\gamma)S_{-\infty}}{\gamma}, \frac{2c^*(S_{-\infty}-S_\infty)}{\sqrt{(c^*)^{2}+4d_2\gamma}+c^*}\bigg\}
\]
for $\xi\in\mathbb{R}$.
\end{proposition}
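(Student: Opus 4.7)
The plan is to establish the four strict inequalities one at a time, using the strong maximum principle for the qualitative bounds and an integrating-factor argument for the sharper upper bound on $I_*$. First I would handle the $S_*$-bounds: rewriting the first equation in \eqref{eq2.15} as $(d_1\partial_\xi^2 - c^*\partial_\xi - p)S_* = 0$, where $p(\xi) := \beta I_*(\xi)/(S_*(\xi)+I_*(\xi))\geq 0$, puts it in elliptic form with non-positive zero-order coefficient. Since Proposition \ref{the2.1} already gives $0\leq S_*\leq S_{-\infty}$, if $S_*$ attained $S_{-\infty}$ at a finite point $\xi_0$ the strong maximum principle would force $S_*\equiv S_{-\infty}$, contradicting $S_\infty<S_{-\infty}$ from Proposition \ref{pro3.1}; applying the same principle to $-S_*$ rules out $S_*(\xi_0)=0$, which would force $S_*\equiv 0$ against $S_*(-\infty)=S_{-\infty}>0$. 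Hence $0<S_*(\xi)<S_{-\infty}$ on $\mathbb{R}$.

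Next, for $I_*(\xi)>0$, the second equation in \eqref{eq2.15} takes the form $(d_2\partial_\xi^2 - c^*\partial_\xi - q)I_* = 0$ with $q(\xi):= \gamma - \beta S_*/(S_*+I_*)$, whose sign is indefinite. I would shift by a large constant $K>0$ (say $K\geq\beta$) so that $(d_2\partial_\xi^2 - c^*\partial_\xi - (q+K))I_* = -KI_*\leq 0$ has a non-positive zero-order coefficient; if $I_*$ vanished at some interior $\xi_0$, the strong maximum principle would give $I_*\equiv 0$, contradicting $I_*(\xi)\geq\underline{I}(\xi)>0$ for $\xi<\xi_3$ from Proposition \ref{the2.1}. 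For the upper bound $I_*<(\beta-\gamma)S_{-\infty}/\gamma$, note that the positive continuous function $I_*$ with $I_*(\pm\infty)=0$ attains its maximum at some finite $\xi_0$; then $I_*'(\xi_0)=0$ and $I_*''(\xi_0)\leq 0$, and plugging into the $I$-equation yields $\gamma I_*(\xi_0)\leq(\beta-\gamma)S_*(\xi_0)<(\beta-\gamma)S_{-\infty}$, so $I_*(\xi)\leq I_*(\xi_0)<(\beta-\gamma)S_{-\infty}/\gamma$.

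The sharper upper bound is the main step. I would factor the constant-coefficient linear part of the $I_*$-equation using the roots $\lambda_\pm = (c^*\pm\sqrt{(c^*)^2+4d_2\gamma})/(2d_2)$ of $d_2\lambda^2 - c^*\lambda - \gamma = 0$, and set $V(\xi):=I_*'(\xi)-\lambda_+I_*(\xi)$, which satisfies the first-order equation
\[
V'(\xi) - \lambda_-V(\xi) = -\frac{1}{d_2}\cdot\frac{\beta S_*(\xi)I_*(\xi)}{S_*(\xi)+I_*(\xi)}.
\]
Multiplying by $e^{-\lambda_-\xi}$ and integrating from $-\infty$ to the maximum point $\xi_0$ (the boundary term at $-\infty$ vanishes because $V$ is bounded while $e^{-\lambda_-\xi}\to 0$, since $\lambda_-<0$) yields
\[
\lambda_+I_*(\xi_0) = \frac{1}{d_2}\int_{-\infty}^{\xi_0} e^{\lambda_-(\xi_0-y)}\,\frac{\beta S_*(y)I_*(y)}{S_*(y)+I_*(y)}\,dy.
\]
Since $e^{\lambda_-(\xi_0-y)}<1$ for $y<\xi_0$ and $\int_{-\infty}^{\infty}\beta S_*I_*/(S_*+I_*)\,dy = c^*(S_{-\infty}-S_\infty)$ by Proposition \ref{pro3.1}, one obtains the strict bound $\lambda_+I_*(\xi_0)<c^*(S_{-\infty}-S_\infty)/d_2$, which rearranges to $I_*(\xi)\leq I_*(\xi_0)<2c^*(S_{-\infty}-S_\infty)/(c^*+\sqrt{(c^*)^2+4d_2\gamma})$.

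The main obstacle is identifying the correct auxiliary substitution $V = I_*'-\lambda_+I_*$, which strips off the growing mode of the constant-coefficient linearization and leaves a first-order equation whose integrating factor $e^{-\lambda_-\xi}$ decays at $-\infty$; once this is in place, the Duhamel integral combined with the global identity $\int \beta S_*I_*/(S_*+I_*) = c^*(S_{-\infty}-S_\infty)$ of Proposition \ref{pro3.1} delivers the exact constant in the claimed bound. The remaining ingredients, namely the strong maximum principle applied to shifted operators and the sign analysis at an interior maximum of $I_*$, are essentially routine.
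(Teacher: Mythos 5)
Your proof is correct. The first three bounds follow essentially the paper's own route: the strong maximum principle for the positivity of $S_*$ and $I_*$ (the paper applies it on a bounded interval containing a point of $(-\infty,\xi_2)$, resp.\ $(-\infty,\xi_3)$, to reach the same contradiction with $\underline{S}>0$, resp.\ $\underline{I}>0$), and evaluation of the $I$-equation at an interior maximum for $I_*<(\beta-\gamma)S_{-\infty}/\gamma$, which the paper phrases as a contradiction at a point where $I_*=M$ but is the same computation. Your treatment of the sharper bound, however, is genuinely different from, though dual to, the paper's: the paper introduces the auxiliary function $P(\xi)=I_*(\xi)+\frac{2\gamma}{\sqrt{(c^*)^{2}+4d_2\gamma}+c^*}\int_{-\infty}^{\xi}I_*(v)\,dv$ in (\ref{eq4.5}), whose derivative $P'=I_*'-\lambda_-I_*$ is precisely the \emph{other} first-order factor of $d_2\partial_\xi^2-c^*\partial_\xi-\gamma$; it shows $P'\ge 0$ by integrating from $+\infty$, concludes $I_*<P\le P(\infty)$, and evaluates $P(\infty)$ via (\ref{eq3.11}). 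You instead take $V=(\partial_\xi-\lambda_+)I_*$, integrate from $-\infty$ to the maximum point $\xi_0$, and bound the Duhamel integral by (\ref{eq3.7}); the arithmetic checks out ($d_2\lambda_+=\frac{1}{2}(c^*+\sqrt{(c^*)^2+4d_2\gamma})$, so $\lambda_+I_*(\xi_0)<c^*(S_{-\infty}-S_\infty)/d_2$ is exactly the stated constant), with strictness supplied by $e^{\lambda_-(\xi_0-y)}<1$ and the positivity already proved. The paper's version yields slightly more information (the pointwise bound $I_*<P$ with $P$ monotone), while yours is more localized and avoids introducing the primitive of $I_*$; both hinge on the integral identity of Proposition \ref{pro3.1}. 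One small point to tidy: your coefficients $p=\beta I_*/(S_*+I_*)$ and $q=\gamma-\beta S_*/(S_*+I_*)$ are a priori undefined where $S_*+I_*=0$, which cannot yet be excluded for large $\xi$. The clean fix is to use the pointwise bounds $0\le f(S_*,I_*)\le\min\{\beta S_*,\beta I_*\}$ built into the definition of $f$, which give $d_1S_*''-c^*S_*'\ge 0$, $d_1S_*''-c^*S_*'-\beta S_*\le 0$ and $d_2I_*''-c^*I_*'-\gamma I_*\le 0$ directly; the strong maximum principle then applies exactly as you intend, and the last inequality already has a non-positive zero-order coefficient, so no constant shift is needed.
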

\begin{proof}
Firstly, it is easy to see from (\ref{eq3.1}), (\ref{eq3.4}) and (\ref{eq3.6}) that
\begin{equation}
\label{eq4.1}
S_{*}(\xi)<S_{-\infty} \;\; \textmd{for} \;\; \xi\in \mathbb{R}.
\end{equation}

Secondly, we claim that $S_{*}(\xi)>0$ for $\xi\in \mathbb{R}$. By contradiction, we assume that $S_{*}(\hat{\xi})=0$ for some $\hat{\xi}\in \mathbb{R}$. Then there exist two constants $l_1$, $l_2\in \mathbb{R}$ such that  $l_1<\xi_2\leq l_2$ and $\hat{\xi}\in (l_1, l_2)$. This implies that $S(\xi)$ attains its minimum in $(l_1, l_2)$. From the first equation in (\ref{eq2.15}) we get
\[
-d_1 S_*''(\xi)+c^*S_*'(\xi)+\beta S_*(\xi)\geq 0\;\;\textmd{for} \;\;\xi\in[l_1, l_2].
\]
 Then it follows from the strong maximum principle that
\[
S_{*}(\xi)\equiv0\;\; \textmd{for}\;\;  \xi\in[l_1, l_2],
\]
which contradicts the fact that $S_{*}(\xi)\geq \underline{S}(\xi)>0$ for $\xi\in[l_1, \xi_2)$.  Hence we have
\begin{equation}
\label{eq4.2}
S_{*}(\xi)>0 \;\; \textmd{for} \;\; \xi\in \mathbb{R}.
\end{equation}
In an analogous manner, one can get that
\begin{equation}
\label{eq4.3}
I_{*}(\xi)>0 \;\; \textmd{for} \;\; \xi\in \mathbb{R}.
\end{equation}

Thirdly, we assert that $I_*(\xi)<M=(\beta-\gamma)S_{-\infty}/\gamma$ for $\xi\in \mathbb{R}$. Assume by way of contradiction that $I_{*}(\tilde{\xi})=M$ for some $\tilde{\xi}\in \mathbb{R}$. Then we have $I_{*}'(\tilde{\xi})=0$ and $I_{*}''(\tilde{\xi})\leq0$. Evaluating the second equation in (\ref{eq2.15}) at the point $\tilde{\xi}$ and using (\ref{eq4.1}), we deduce that
\[
\begin{split}
0&=c^{*}I_*'(\tilde{\xi})\\
&=d_{2}I_*''(\tilde{\xi})+\frac{\beta S_*(\tilde{\xi})I_*(\bar{\xi})}{S_*(\tilde{\xi})+I_*(\tilde{\xi})}-\gamma I_*(\tilde{\xi})\\
&<\frac{\beta S_{-\infty}M}{S_{-\infty}+M}-\gamma M\\
&=0,
\end{split}
\]
which leads to a contradiction. Therefore we have
\begin{equation}
\label{eq4.4}
I_*(\xi)<\frac{(\beta-\gamma)S_{-\infty}}{\gamma} \;\; \textmd{for} \;\; \xi\in \mathbb{R}.
\end{equation}

Finally, we prove that
\[
I_{*}(\xi)<\frac{2c^{*}(S_{-\infty}-S_{\infty})}{\sqrt{(c^{*})^{2}+4d_{2}\gamma}+c^{*}} \;\;\textmd{for}\;\; \xi\in\mathbb{R}.
\]
To this end, we introduce an auxiliary function
\begin{equation}
\label{eq4.5}
\begin{split}
P(\xi)=I_{*}(\xi)+\frac{2\gamma}{\sqrt{(c^{*})^{2}+4d_{2}\gamma}+c^{*}}\int_{-\infty}^{\xi}I_{*}(v)dv.
\end{split}
\end{equation}
Then by (\ref{eq2.16}) and (\ref{eq3.9}), one can see that $P(\xi)$ is well-defined in $\mathbb{R}$. Since $I_*(\xi)\geq \underline{I}(\xi)>0$ on $(-\infty,\xi_3)$, we have $P(\xi)>0$ and $I_*(\xi)<P(\xi)$ for $\xi\in\mathbb{R}$. Note that $I_{*}(\pm\infty)=0$, then we get from
(\ref{eq4.5}) and (\ref{eq3.11}) that
\begin{equation}
\label{eq4.6}
P(-\infty)=0  \;\;\; \textmd{and} \;\;\; P(\infty)=\frac{2c^{*}(S_{-\infty}-S_{\infty})}{\sqrt{(c^{*})^{2}+4d_{2}\gamma}+c^{*}}.
\end{equation}
Differentiating (\ref{eq4.5}) with respect to $\xi$ yields
\begin{equation}
\label{eq4.7}
\begin{split}
P'(\xi)=I_{*}'(\xi)+\frac{2\gamma}{\sqrt{(c^{*})^{2}+4d_{2}\gamma}+c^{*}}I_{*}(\xi).
\end{split}
\end{equation}
Then from $I_{*}(\pm\infty)=I_{*}'(\pm\infty)=0$ we obtain that $P'(\pm\infty)=0$. Differentiating (\ref{eq4.7}) with respect to $\xi$ and using the second equation in (\ref{eq2.15}), we get
\begin{equation}
\label{eq4.8}
\begin{split}
-d_{2}P''(\xi)+bP'(\xi)&=-d_{2}I_{*}''(\xi)+c^{*}I_{*}'(\xi)+\gamma I_{*}(\xi)\\
&=\frac{\beta S_{*}(\xi)I_{*}(\xi)}{S_{*}(\xi)+I_{*}(\xi)},
\end{split}
\end{equation}
where
\[
b=\frac{c^{*}+\sqrt{(c^{*})^{2}+4d_{2}\gamma}}{2}.
\]

From (\ref{eq4.8}) and using $P'(\infty)=0$  and (\ref{eq2.16}), we infer that
\[
P'(\xi)=\frac{\beta}{d_{2}}\int_{\xi}^{\infty}e^{\frac{b}{d_{2}}(\xi-v)}\frac{S_{*}(v)I_{*}(v)}{S_{*}(v)+I_{*}(v)}dv\geq0,
\]
which together with (\ref{eq4.6}) yields
\[
P(\xi)\leq P(\infty)=\frac{2c^{*}(S_{-\infty}-S_{\infty})}{\sqrt{(c^{*})^{2}+4d_{2}\gamma}+c^{*}} \;\;\textmd{for}\;\; \xi\in\mathbb{R}.
\]
 Then from (\ref{eq4.5}) and (\ref{eq4.3}), we obtain that
\[
 I_{*}(\xi)< \frac{2c^{*}(S_{-\infty}-S_{\infty})}{\sqrt{(c^{*})^{2}+4d_{2}\gamma}+c^{*}} \;\;\textmd{for}\;\;  \xi\in\mathbb{R}.
\]
All the claims of this proposition are shown.
\end{proof}
\begin{remark}
\begin{itemize}
\item[(1)] In fact, the inequalities (\ref{eq3.4}), (\ref{eq4.2}) and (\ref{eq4.3}) imply that $S_*(\xi)$ is strictly decreasing in $\mathbb{R}$.
\item[(2)] When $\mathcal{R}_0>1$ and $c>c^{*}$, Wang et al. \cite{Wang-Wang-Wu-2012} introduced a function
\[
J(\xi):=I(\xi)+\frac{\gamma}{c}\int_{-\infty}^\xi I(y)dy+\frac{\gamma}{c}\int_{\xi}^{\infty} e^{-\frac{c}{d_2}(\xi-y)} I(y)dy, \quad \xi\in\mathbb{R}
\]
to obtain an upper bound of $I$-component in system (\ref{eq1.2}) with $c>c^*$, that is,
\[
I(\xi)\leq S_{-\infty}-S_{\infty}\;\; \textmd{for} \;\; \xi \in \mathbb{R}.
\]
While using the function $P(\xi)$ in (\ref{eq4.5}) by replacing $c^*$ by $c$, one can get a better estimate of $I$-component in (\ref{eq1.2}) with $c>c^*$, i.e.,
\[
I(\xi)<\frac{2c(S_{-\infty}-S_{\infty})}{\sqrt{c^{2}+4d_2\gamma}+c}\;\; \textmd{for} \;\; \xi \in \mathbb{R}.
\]
\end{itemize}
\end{remark}

\end{document}